\renewcommand{\theequation}{\thesection.\arabic{equation}}
\newtheorem{theorem}{Theorem}
\newtheorem{lemma}{Lemma}
\newtheorem{remark}{Remark}
\newtheorem{definition}{Definition}
\newcommand{\eqnsection}{
\renewcommand{\theequation}{\thesection.\arabic{equation}}
    \makeatletter
    \csname  @addtoreset\endcsname{equation}{section}
    \makeatother}
\def\demi{{1\over 2}}
\def\P{{{\Bbb P}}}
\def\R{{{\Bbb R}}}
\def\lll{{{\mathcal L}}}
\def\fff{{{\mathcal F}}}
\def\E{{{\Bbb E}}}
\def\indic{{{\mathbbm 1}}}
\newtheorem{corol}{Corollary}[section]
\newtheorem{rema}{Remark}[section]
\newtheorem{exa}{Example}
\renewcommand{\le}{\leqslant}
\renewcommand{\ge}{\geqslant}
\newcommand{\bal}{\begin{align*}}
\newcommand{\eal}{\end{align*}}
\newcommand{\beq}{\begin{eqnarray*}}
\newcommand{\eeq}{\end{eqnarray*}}
\newcommand{\bte}{\begin{theorem}}
\newcommand{\ete}{\end{theorem}}
\newcommand{\bl}{\begin{lemma}}
\newcommand{\el}{\end{lemma}}
\newcommand{\bd}{\begin{description}}
\newcommand{\ed}{\end{description}}
\newcommand{\bc}{\begin{cases}}
\newcommand{\ec}{\end{cases}}
\newcommand{\bp}{\begin{proof}}
\newcommand{\ep}{\end{proof}}
\newcommand{\bco}{\begin{corol}}
\newcommand{\eco}{\end{corol}}
\newcommand{\sign}{\mathsf{sign}}
\newcommand{\1}{\hbox{1 \hskip -7pt I}}
\newcommand{\iy}{\infty}
\newcommand{\tx}{\text}
\newcommand{\Cst}{\mathsf{Cst}}
\newcommand{\F}{\mathcal{F}}
\newcommand{\Hh}{\mathcal{H}}
\newcommand{\Es}{\mathbb{E}}
\newcommand{\Ec}{\mathcal{E}}
\newcommand{\s}{\sigma}
\renewcommand{\phi}{\varphi}
\newcommand{\tphi}{\tilde{\phi}}
\newcommand{\sphi}{{\sigma\Phi}}
\newcommand{\tpe}{\tilde{\phi}^\varepsilon}
\newcommand{\eps}{\varepsilon}
\def\bdes{\begin{description}}
\def\edes{\end{description}}
\def\tY{{\tilde Y}}
\def\cY{{\check Y}}
\begin{document}
\author[C. Sabot]{Christophe SABOT}
\address{Universit\'e de Lyon, Universit\'e Lyon 1,
Institut Camille Jordan, CNRS UMR 5208, 43, Boulevard du 11 novembre 1918,
69622 Villeurbanne Cedex, France} \email{sabot@math.univ-lyon1.fr}
\author[P. Tarres]{Pierre Tarres}
\address{Ceremade, CNRS UMR 7534 and Universit\'e Paris-Dauphine, Place de Lattre de Tassigny, 75775 Paris Cedex 16, France.} \email{tarres@ceremade.dauphine.fr}
\title[ ]{Inverting Ray-Knight identity}

\subjclass[2010]{primary 60J27, 60J55,
secondary 60K35, 81T25, 81T60}
\thanks{This work was partly supported by the ANR project MEMEMO2, and the LABEX MILYON}
\thanks{The first author is grateful to DMA, ENS, for his hospitality and financial support while part of this work was done.}

\maketitle
\begin{abstract}
We provide a short proof of the Ray-Knight second generalized Theorem, using a martingale which can be seen (on the positive quadrant) as the Radon-Nikodym derivative of the reversed vertex-reinforced jump process measure with respect to the Markov jump process with the same conductances. Next we show that a variant of this process provides an inversion of that Ray-Knight identity. We give a similar result for the Ray-Knight  first generalized Theorem.
\end{abstract}
\section{Introduction}
\label{intro}
Let  $G=(V,E,\sim)$ be a nonoriented connected finite graph without loops, with conductances $(W_e)_{e\in E}$; define, for all $x$, $y$ $\in V$, $W_{x,y}=W_{\{x,y\}}\indic_{x\sim y}$. 

Let $\Ec$ and $L$ be respectively the associated Markov generator and Dirichlet form defined by, for all $f\in\R^V$,  
\begin{align*}
Lf(x)=&\sum_{y\in V}W_{x,y}(f(y)-f(x))\\
\Ec(f,f)&=\frac{1}{2}\sum_{x,y\in V}W_{x,y}(f(x)-f(y))^2
\end{align*}

Let $x_0\in V$ be a special point that will be fixed throughout the text. let $U=V\setminus\{x_0\}$, and let $P^{G,U}$ be the unique probability on $\R^V$ under which $(\varphi_x)_{x\in V}$ is the centered Gaussian field with covariance $E^{G,U}[\varphi_x\varphi_y]=g_U(x,y)$, where $g_U$ is the Green function killed outside $U$, in other words
$$P^{G,U}=\frac{1}{(2\pi)^{\frac{|U|}{2}}\sqrt{\det G_U}}\exp\left\{-\frac{1}{2}\Ec(\varphi,\varphi)\right\}\delta_0(\phi_{x_0})\prod_{x\in U}d\varphi_x,$$
where $G_U:=g_U(.,.)$, and $\delta_0$ is the Dirac at 0 so that the integration is on $(\phi_x)_{x\in U}$ with $\phi_{x_0}=0$.

Let $\P_{z_0}$ be the law under which $(X_t)_{t\ge0}$ is a Markov Jump Process with conductances $(W_e)_{e\in E}$ (i.e. jump rates $W_{ij}$ from $i$ to $j$ $\in V$) starting at $z_0$ at time $0$, with right-continuous paths and local times at $x\in V$, $t\ge0$, 
\begin{equation}
\label{deflt}
\ell_x(t)=\int_0^t \indic_{\{X_u=x\}}\,du.
\end{equation}
Let $\tau_.$ be the right-continuous inverse of $t\rightarrow \ell_{x_0}(t)$
$$\tau_u=\inf\{t\ge0;\,\ell_{x_0}(t)>u\},\, \tx{ for }u\ge0.$$
Our first aim is to provide a short proof of the generalized second Ray-Knight theorem.

\begin{theorem}[Generalized second Ray-Knight theorem, \cite{ekmrs}]
\label{rk2}
For any $u>0$, 
\begin{align*}
&\left(\ell_x(\tau_u)+\frac{1}{2}\phi_x^2\right)_{x\in V} \tx{ under }\P_{x_0}\otimes P^{G,U}, \tx{ has the same law as}\\
&\left(\frac{1}{2}(\phi_x+\sqrt{2u})^2\right)_{x\in V}\tx{ under }P^{G,U}.
\end{align*}
\end{theorem}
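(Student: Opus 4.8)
The plan is to introduce an auxiliary process whose law, suitably reversed and time-changed at the point $x_0$, has a tractable Radon-Nikodym derivative against the Markov jump process $\P$, and then to identify that derivative with a Gaussian density. Concretely, I would first recall (or construct) the vertex-reinforced jump process (VRJP) associated with the conductances $(W_e)$, and consider the process run until the local time at $x_0$ reaches a fixed level $u$. The key observation, announced in the abstract, is that there is an explicit positive martingale $M_t$ — built from the local times $(\ell_x(t))_{x\in V}$ and the current position $X_t$ — such that $M_t$ is, on the positive quadrant, the density of the time-reversed VRJP measure with respect to the Markov jump process with the same conductances. I would state and prove this martingale property first: this amounts to checking that $M$ solves the appropriate forward equation, i.e. that $\mathcal{L}^{VRJP}$-type generator terms cancel against the $\partial_t$ term, which is a finite computation using the generator $L$ and the explicit form of the VRJP jump rates (rates of the form $W_{x,y}\sqrt{(1+2\ell_y)/(1+2\ell_x)}$ or the analogous expression with the Gaussian normalization).

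Next I would use this martingale to perform a change of measure. Under the tilted measure $M_{\tau_u}\cdot\P_{x_0}$, the field $(\ell_x(\tau_u))_{x\in V}$ together with the terminal data acquires a law that, after completing the square, factorizes as a product of (i) a Gaussian piece in a shifted variable and (ii) the original local-time field. The precise algebraic identity to aim for is that
\begin{align*}
M_{\tau_u}\,\frac{1}{(2\pi)^{|U|/2}\sqrt{\det G_U}}\exp\left\{-\tfrac12\Ec(\varphi,\varphi)\right\}
\end{align*}
evaluated on $\varphi_x^2 = 2\ell_x(\tau_u) + \varphi_x^2$-type substitutions reorganizes into the density of $\tfrac12(\varphi_x+\sqrt{2u})^2$. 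In other words, the theorem will follow by writing both sides as images of explicit densities under the squaring map $\varphi\mapsto \varphi^2/2$ and matching them via the martingale identity and a Gaussian shift (the $\sqrt{2u}$ shift is exactly the drift picked up from $M_{\tau_u}$ at level $u$). I would carry out the completion of the square carefully, tracking the Jacobian of the squaring map (which produces the factors $\prod 1/\sqrt{2\ell_x+\varphi_x^2}$ or $\prod 1/\varphi_x$ that must cancel), since this is where sign choices and the role of the basepoint $x_0$ (where $\varphi_{x_0}=0$ and $\ell_{x_0}(\tau_u)=u$) enter.

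The main obstacle I anticipate is the verification that $M$ is genuinely a martingale — equivalently, the identification of the correct reinforced jump process and the proof that its time-reversal has the stated density. One must handle the jump structure (the reversed process is again a jump process but with local-time-dependent rates, so the Radon-Nikodym derivative has both an exponential "Girsanov" part from the waiting times and a product part from the jumps), and one must check integrability/uniform integrability of $M$ up to the stopping time $\tau_u$ so that optional stopping applies. A secondary technical point is justifying the use of $\delta_0(\phi_{x_0})$ and the degenerate Gaussian on $\R^V$ — i.e. that all manipulations take place on the $|U|$-dimensional space with $\phi_{x_0}$ pinned — and checking that the final law on $\R^V$ matches coordinatewise, including at $x_0$ where the right-hand side gives $\tfrac12(\sqrt{2u})^2 = u$, consistent with $\ell_{x_0}(\tau_u)=u$ and $\phi_{x_0}=0$ on the left. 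Once the martingale identity is in hand, the rest is a bookkeeping exercise with Gaussian densities and the change of variables $\varphi \mapsto \varphi^2/2$.
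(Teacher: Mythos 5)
Your engine---an explicit martingale built from the local times which is, on the positive quadrant, the Radon--Nikodym derivative of a time-reversed VRJP with respect to the Markov jump process---is indeed the object driving the paper's proof, and your plan to verify the martingale property through the generator and to check uniform integrability up to the stopping time matches Lemma \ref{l1}. But there is a genuine missing idea. After the change of variables $\Phi=\sqrt{2\ell(\tau_u)+\phi^2}$, the image of the integration domain is the set $D_u$ of fields with $\Phi_{x_0}=\sqrt{2u}$ and $\Phi_x^2/2\ge\ell_x(\tau_u)$, and the crucial observation \eqref{Tsigma} is that $\Phi\in D_u$ is equivalent to $X_T=x_0$, i.e.\ $T=\tau_u$, where $T$ is the first time some local time reaches $\Phi_i^2/2$. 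Hence what must be evaluated is $\Es_{x_0}\left[M_T\indic_{\{X_T=x_0\}}\right]$, not $\Es_{x_0}[M_T]$. Optional stopping applied to the single positive-spin martingale $M^{\Phi}$ (your reversed-VRJP density) only yields the latter, and the indicator cannot be dropped: $M^{\Phi}_T>0$ on the event $\{X_T\neq x_0\}$, which has positive probability. Your proposal contains no mechanism to kill that contribution. The paper's key step, absent from your plan, is to sum the martingales over all sign configurations $\sigma$ with $\sigma_{x_0}=+1$, forming $N^{\Phi}=\sum_\sigma M^{\sigma\Phi}$, and to prove the spin-flip cancellation of Lemma \ref{sign-flip}: if $X_T=x\neq x_0$ then $\Phi_x(T)=0$, so flipping $\sigma_x$ leaves $\sigma\Phi(T)$ unchanged but negates the prefactor, whence $M^{\sigma^x\Phi}_T=-M^{\sigma\Phi}_T$ and $N^{\Phi}_T\indic_{\{X_T=x\}}=0$. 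The same sum over signs is also forced at both ends of the computation: the Gaussian field ranges over all of $\R^U$ while the substitution only determines $|\phi|$, so the Gaussian integral must first be split into orthants, and at the end $N^{\Phi}_0=\sum_\sigma \exp\{-\frac{1}{2}\Ec(\sigma\Phi,\sigma\Phi)\}$ resums into the full Gaussian integral. Without this ``magnetized'' sum the single-martingale change of measure does not close.

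Two secondary corrections. First, no completion of the square occurs and the $\sqrt{2u}$ is not a drift ``picked up from $M_{\tau_u}$'': it enters only because $\ell_{x_0}(\tau_u)=u$ and $\phi_{x_0}=0$ pin $\Phi_{x_0}=\sqrt{2u}$, and because $\Ec$ is invariant under adding a constant, so the field with density proportional to $\exp\{-\frac{1}{2}\Ec(\Phi,\Phi)\}$ pinned at $\Phi_{x_0}=\sqrt{2u}$ is exactly $\phi+\sqrt{2u}$ under $P^{G,U}$. Second, the paper deliberately proves the martingale property directly for the Markov process $(\ell(t),X_t)$ via the exponential-martingale lemma of Ethier--Kurtz, so no construction of the VRJP or of its time reversal is needed for this theorem; that identification is only an aside, made in Section 3.
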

This theorem is due to Eisenbaum, Kaspi, Marcus, Rosen and Shi \cite{ekmrs} and is closely related to Dynkin's isomorphism; see \cite{eisenbaum} for a first relation between Ray-Knight theorem and Dynkin's isomorphism, and \cite{marcus-rosen} for an overview of the subject; see \cite{lejan2,lupu} for related work on the link between Markov loops and the Gaussian Free Field. Note that this result plays a crucial r\^ole in a recent work of Ding, Lee and Peres \cite{dlp} on cover times of discrete Markov processes, 
and in the study of random interlacements, see for instance \cite{sznitman1,sznitman2,sznitman}. 

In Section 2, we give our short proof of Theorem \ref{rk2}, independent of any reference to the VRJP. Similar results would hold for Dynkin and Eisenbaum isomorphism theorems (see Section 5, and \cite{marcus-rosen}, \cite{sznitman}). We note that there is also a non-symmetric version of Dynkin's isomorphism \cite{lejan3} (see also \cite{eisenbaum-kaspi}), which our technique cannot provide as it is.

In Section 3, we explain how the martingale appearing in this proof is 
related to the Vertex reinforced jump process (VRJP). However, note that the proof does not need any reference to the VRJP.

This short proof in fact yields an identity that corresponds to an inversion of the Ray-Knight identity, proved in Section 4. Indeed, Theorem \ref{rk2} gives an identity in law, but
fails to give any information on the law of $(\ell_x(\tau_u), \phi_x)_{x\in V}$ conditioned on $\left(\ell_x(\tau_u)+\frac{1}{2}\phi_x^2\right)_{x\in V}$.
We provide below a process that describes this conditional law. 

Finally, Section 5 yields the equivalent inversion for the generalized first Ray-Knight theorem.

Let $(\Phi_x)_{x\in V}$ be positive reals. As before, we fix the special point $x_0\in V$. 
We consider the continuous time process $(\cY_s)_{s\ge 0}$
with state space $V$ defined as follows.
We set
$$
\check L_i(s)= \Phi_i-\int_{0}^s \indic_{\check Y_u=i} du.
$$
At time $s$, we consider the Ising model $(\s_x)_{x\in V}$ on $G$ with interaction
$$
J_{i,j}(s)=
W_{i,j}\check L_i(s) \check L_j(s).
$$
and with boundary condition $\sigma_{x_0}=+1$. We denote by 
$$F(s)=\sum_{\sigma\in \{-1,+1\}^{V\setminus\{x_0\}}}  e^{\sum_{\{i,j\}\in E} J_{i,j}(s) \sigma_i \sigma_j}
$$
its partition function and by $<\cdot>_s$ its associated expectation, so that for example
$$
<\sigma_x>_s = {1\over F(s)}\sum_{\sigma\in \{-1,+1\}^{V\setminus\{x_0\}}}  \sigma_x e^{\sum_{\{i,j\}\in E} J_{i,j}(s) \sigma_i \sigma_j}>0
$$
The process $\check Y$ is then defined as the jump process which, 
conditioned on the past at time $s$, if $Y_s=i$, jumps from $i$ to $j$ at a rate
$$
W_{i,j} L_j(s) {<\sigma_j>_s\over <\sigma_i>_s}.
$$
and stopped at the time
\begin{eqnarray}\label{S}
S=\sup\{s\ge 0, \; \check L_i(s)> 0 \hbox{ for all $i$}\}.
\end{eqnarray}
Note, using the positivity of the Ising model (see for instance \cite{werner}, proposition 7.1),  that $<\sigma_x>_s>0$ for all $s<S$. Hence the process  $\check Y$ is defined up to time $S$. 

We denote by $\P_{\Phi,z}^{\check Y}$ the law of $\cY$ starting from $z$ and initial condition $\Phi$, and stopped at time $S$.  (Note that this law
also depends on the choice of the "special point" $x_0$).

\begin{lemma}\label{fin}
Starting from any point $z\in V$, the process $\cY$ ends at $x_0$, i.e. 
$$S<\iy \;\;\hbox{  and }\;\; 
\cY_S=x_0, \;\;\; \hbox{$\P_{\Phi,z}^{\check Y}$ a.s.}
$$
\end{lemma}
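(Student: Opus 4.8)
The plan is to track the total remaining "clock" $T(s) = \sum_{i \in V} \check L_i(s)$ and to show that the process cannot avoid exhausting it. First I would observe that between jumps $\check L_{\check Y_s}(s)$ decreases at unit rate while all other $\check L_i$ stay constant, so $T(s) = \sum_i \Phi_i - s$ as long as the process is alive; hence $S \le \sum_i \Phi_i < \infty$ automatically, and the only real content of the lemma is that at the stopping time $S$ the walker sits at $x_0$. The key point is that $x_0$ is special: its local time coordinate is \emph{not} among the $\check L_i$ that must stay positive (the Ising boundary condition is fixed at $x_0$, $\check L_{x_0}$ plays no role in the definition of $S$), so the natural guess is that the clocks $\check L_i$, $i \ne x_0$, run out one after another, the corresponding vertices get "frozen out", and the walk is eventually confined to $\{x_0\}$.

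Next I would make this precise by an induction on the vertex set. Fix $i_1 \ne x_0$ and let $S_1 = \inf\{s : \check L_{i_1}(s) = 0\}$ be the first time some clock hits $0$ (by relabelling, say it is $i_1$'s). I claim that for $s \ge S_1$ the process $\check Y$ never visits $i_1$ again. This is where the structure of the jump rates enters: the rate of jumping \emph{into} $j$ from $i$ is $W_{i,j}\check L_j(s)\,\langle\sigma_j\rangle_s/\langle\sigma_i\rangle_s$, which carries a factor $\check L_j(s)$; when $\check L_{i_1}(s) = 0$ the rate into $i_1$ vanishes, so $i_1$ becomes inaccessible, and $\check L_{i_1}$ stays at $0$ forever after. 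Simultaneously, when $\check L_{i_1}(s)=0$ the Ising couplings $J_{i_1,j}(s) = W_{i_1,j}\check L_{i_1}(s)\check L_j(s)$ involving $i_1$ all vanish, so the Ising model on $V$ with boundary $x_0$ effectively decouples into the Ising model on $V\setminus\{i_1\}$ (with the same boundary) times a free spin at $i_1$; in particular $\langle\sigma_j\rangle_s$ for $j \ne i_1$ is computed on the reduced graph, $\langle\sigma_j\rangle_s > 0$ persists, and the process $\check Y$ restricted to $V\setminus\{i_1\}$ is exactly the process of the same type built on the graph $G$ with vertex $i_1$ deleted, started from $\check Y_{S_1}$ with initial clocks $\check L_j(S_1)$, $j \ne i_1$. (One must check $\check Y_{S_1} \ne i_1$: just before $S_1$ the walker was sitting at $i_1$ draining its clock, but it may have left already; in any case immediately after $S_1$ it is at some $j \ne i_1$, which is all we need, and if $\check Y_{S_1^-}=i_1$ the walk jumps out instantaneously since there is no holding at a vertex with zero clock.)

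Then I would iterate: on the reduced graph the same argument applies, a second clock among $\{\check L_j : j \ne i_1, x_0\}$ reaches $0$ at some finite time, that vertex is frozen out, and so on. Since $V$ is finite and connected and $x_0$ is never removed, after at most $|V|-1$ such steps every vertex except $x_0$ has been eliminated, so for $s$ large enough the only accessible state is $x_0$; combined with $S \le \sum_i\Phi_i < \infty$ this forces $\check Y_S = x_0$, $\P^{\check Y}_{\Phi,z}$-a.s. The main obstacle is the decoupling step — verifying carefully that when a boundary-free clock hits zero the walk genuinely cannot return to that vertex (so $S$ is not reached prematurely with the walker stuck somewhere else) and that the surviving dynamics is again an instance of the same construction so the induction closes. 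The positivity of the Ising model (quoted in the excerpt) is what guarantees the rates stay well-defined throughout, so no extra input is needed there.
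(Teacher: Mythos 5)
Your proof rests on a misreading of the definition of $S$. In the paper, $S=\sup\{s\ge 0:\ \check L_i(s)>0 \hbox{ for all } i\}$ with $i$ ranging over \emph{all} of $V$, including $x_0$ (this is confirmed by the time-changed formulation $\check T=\inf\{t:\ \ell^{\check Z}_i(t)=\Phi_i^2/2 \hbox{ for some } i\in V\}$ and by Theorem \ref{inverse1}, where one needs $\check L_{x_0}(S)=0$ and $\check L_i(S)>0$ for $i\neq x_0$, matching $\phi_{x_0}=0$, $|\phi_i|>0$). Since the clocks are nonincreasing and only the clock of the currently occupied vertex decreases, $S$ is the \emph{first} time any clock is exhausted, the process is stopped there, and $\check Y_S$ is the vertex whose clock ran out. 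So the content of Lemma \ref{fin} is a genuinely probabilistic statement: almost surely no vertex $j\neq x_0$ has its clock drained to zero while the walker sits on it; the walker is expelled in time, and it is the clock at $x_0$ that is exhausted first, with all other clocks still strictly positive. Your picture — clocks at $j\neq x_0$ hitting zero one after another, those vertices being ``frozen out'', the walk continuing on a reduced graph and ending up confined to $\{x_0\}$ — describes a different process (there is no continuation after a clock hits zero) and is in fact the opposite of what happens. Moreover, your only probabilistic ingredient, namely that the jump rate \emph{into} $j$ carries a factor $\check L_j(s)$ and hence vanishes when $\check L_j(s)=0$, does not address the relevant event at all: the danger is the walker already sitting at $j\neq x_0$ and draining $\check L_j$ down to $0$ before jumping away. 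Ruling this out requires either a quantitative repulsion estimate (as $\check L_j\to 0$ the vertex $j$ decouples from the $+1$ boundary, so $\langle\sigma_j\rangle_s\to 0$ and the exit rate, which has $\langle\sigma_j\rangle_s$ in its denominator, blows up) or, as the paper does, a change-of-measure argument. The only part you do get correctly, $S\le\sum_i\Phi_i<\infty$, is the trivial half of the lemma.

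For comparison, the paper's proof is a one-line consequence of Lemma \ref{N}: the time-changed process $\check Z$ has Radon--Nikodym derivative $N^\Phi_{t\wedge T}/N^\Phi_0$ with respect to the Markov jump process $X$ stopped at $T$ (part (iii)), and the sign-flip symmetry gives $N^\Phi_T\indic_{\{X_T=x\}}=0$ for every $x\neq x_0$ (part (ii)); hence by optional stopping
\begin{equation*}
\P^{\check Y}_{\Phi,z}\bigl(\check Y_S\neq x_0\bigr)=\E_{z}\Bigl(N^\Phi_T\,\indic_{\{X_T\neq x_0\}}\Bigr)/N^\Phi_0=0 .
\end{equation*}
Any correct alternative proof must reproduce this exclusion of premature extinction at $j\neq x_0$; your argument as written does not, so the gap is essential rather than cosmetic.
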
 
This process provides an inversion to the second
Ray-Knight identity, as stated in the following theorem.

\begin{theorem}\label{inverse1}
Let $\ell$, $\phi$, $\tau_u$ be as in Theorem \ref{rk2} and set
$$
\Phi_x = \sqrt{ \phi_x^2+ 2 \ell_x(\tau_u)}.
$$
Under $\P_{x_0}\otimes P^{G,U}$, we have
$$
\lll\left( \phi | \Phi\right)\stackrel{law}{=}  (\s\check L(S)),
$$
where $\check L(S)$ is distributed under $\P_{\Phi,x_0}^{\check Y}$ and,  conditionally on $\check L(S)$, 
 $\sigma$ is distributed according to the distribution of the Ising model with interaction $J_{i,j}(S)=W_{i,j}\check L_i(S)\check L_j(S)$ with boundary condition
 $\sigma_{x_0}=+1$.

\end{theorem}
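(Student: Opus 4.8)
The plan is to strip off the Ising signs first, which turns the statement into a single assertion about the conditional law of $|\varphi|$ given $\Phi$, and then to establish that assertion by reading the proof of Theorem \ref{rk2} backwards: the process $\cY$ is the time‑reversed (variant of the) vertex‑reinforced jump process of Sections 2--3, and the martingale introduced there transports the computation back to the plain Markov jump process, where the answer is already known.

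\emph{Removing the signs.} Conditionally on $(|\varphi_x|)_{x\in V}$ the Gaussian weight factorizes as $\exp(-\tfrac12\Ec(\varphi,\varphi))=\exp(-\tfrac12\sum_x W_x\varphi_x^2)\exp\big(\sum_{\{i,j\}\in E}W_{i,j}|\varphi_i||\varphi_j|\,\sigma_i\sigma_j\big)$ with $\sigma_x=\mathrm{sign}(\varphi_x)$ and $W_x:=\sum_y W_{x,y}$, so that, given $|\varphi|$, the signs $\sigma$ form an Ising model with interaction $W_{i,j}|\varphi_i||\varphi_j|$ (with no constraint at $x_0$, since $\varphi_{x_0}=0$). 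Moreover $\Phi_x=\sqrt{\varphi_x^2+2\ell_x(\tau_u)}$ is a function of $(|\varphi|,X)$ only, and $\varphi$ is independent of $X$, so conditioning in addition on $\Phi$ does not alter this conditional law of the signs. Writing $\varphi=\sigma\,|\varphi|$, it follows that $\lll(\varphi\mid\Phi)$ has exactly the structure claimed in Theorem \ref{inverse1} as soon as one knows that, conditionally on $\Phi$, $(|\varphi_x|)_{x\in V}$ has the law of $\check L(S)$ under $\P^{\check Y}_{\Phi,x_0}$. This is the only thing left to prove.

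\emph{The reference measure.} By Theorem \ref{rk2}, under $\P_{x_0}\otimes P^{G,U}$ the field $\Phi$ has the law of $(|\varphi_x+\sqrt{2u}|)_{x\in V}$ under $P^{G,U}$. Summing over the $2^{|U|}$ sign choices $\varphi^\varepsilon_x=\varepsilon_x\Phi_x-\sqrt{2u}$ (with $\varepsilon_{x_0}=+1$, so $\varphi^\varepsilon_{x_0}=0$) and using that shifting by the constant $\sqrt{2u}$ leaves the Dirichlet form invariant, so that $\Ec(\varphi^\varepsilon,\varphi^\varepsilon)=\sum_x W_x\Phi_x^2-2\sum_{\{i,j\}\in E}W_{i,j}\varepsilon_i\varepsilon_j\Phi_i\Phi_j$, one gets that the density of $(\Phi_x)_{x\in U}$ is proportional to $\exp(-\tfrac12\sum_x W_x\Phi_x^2)$ times the Ising partition function with interaction $W_{i,j}\Phi_i\Phi_j$ and boundary value $+1$ at $x_0$. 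Since this $\Phi$-marginal is common to both sides, it suffices to match the \emph{joint} law of $(\Phi,|\varphi|)$ under $\P_{x_0}\otimes P^{G,U}$ with that of $(\Phi,\check L(S))$ where $\Phi$ is drawn from the law just computed and then $\check L(S)$ from $\P^{\check Y}_{\Phi,x_0}$.

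\emph{The inversion, and the main obstacle.} Here I would invoke the identification, set up in Sections 2--3, of $\cY$ with the time‑reversed vertex‑reinforced jump process and of the martingale $M$ with the Radon--Nikodym derivative of (the law of) that reversed process with respect to the Markov jump process carrying the same conductances $W$. The jump rates $W_{i,j}\check L_j(s)\langle\sigma_j\rangle_s/\langle\sigma_i\rangle_s$ exhibit $\cY$ as a Doob transform, with harmonic data the Ising one‑point functions $\langle\sigma_\cdot\rangle_s$, of the ``depleting'' jump process with rates $W_{i,j}\check L_j(s)$; after the high‑temperature / random‑current rewriting of these one‑point functions, reversing time in $\cY$ produces the Markov jump process from $x_0$ (coupled with a random current), and Lemma \ref{fin} ($\cY_S=x_0$) guarantees that this reversed path is a genuine excursion from $x_0$, stopped at a time playing the role of $\tau_u$. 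The partition function $F(s)$, corrected by the explicit exponential drift coming from $\dot{\check L}_i=-\indic_{\check Y_s=i}$, is precisely the space‑time harmonic function realizing $M$; applying this change of measure to $\P^{\check Y}_{\Phi,x_0}$ and stopping at $S$ turns the joint law of $(\Phi,\check L(S))$ into that of $(\Phi,|\varphi|)$ under $\P_{x_0}\otimes P^{G,U}$, by the same computation that proves Theorem \ref{rk2}, and the atoms $\{\ell_x(\tau_u)=0\}$ are matched with $\{\check Y\text{ never visits }x\}$ by repeating the argument on the subgraph spanned by the visited vertices. The main obstacle is exactly this last step: checking that the Ising one‑point‑function ratios in the rates of $\cY$ really do turn the reversed dynamics into the Markov jump process weighted by the Section‑2 martingale, keeping track of all the drift terms and of the correspondence $S\leftrightarrow\tau_u$ under time reversal; everything else is Gaussian algebra (and, for a self‑contained check of the remaining identity, the classical formula for the joint density of $(\ell_x(\tau_u))_{x\in V}$).
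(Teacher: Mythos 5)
Your reduction is sound and matches the paper's overall strategy: stripping the signs via the Gaussian--Ising factorization (conditionally on $|\phi|$ the signs are Ising with interaction $W_{i,j}|\phi_i||\phi_j|$, and this is unaffected by further conditioning on $\Phi$ since $X$ is independent of $\phi$), and identifying the density of $\Phi$ as a constant times $\sum_\sigma\exp\{-\frac12\Ec(\sigma\Phi,\sigma\Phi)\}$, are exactly the ingredients the paper uses (its $CN_0^\Phi$). After this, the whole theorem rests on one claim: that, conditionally on $\Phi$, the law of the stopped path of $X$ (hence of $\ell(\tau_u)$, hence of $|\phi|=\sqrt{\Phi^2-2\ell(\tau_u)}$) is the law of the time-changed magnetized process $\check Z$ under $\P^{\check Z}_{\Phi,x_0}$, i.e.\ that $N^\Phi_{t\wedge T}/N^\Phi_0$ is the Radon--Nikodym derivative of $\P^{\check Z}_{\Phi,x_0}$ with respect to the Markov jump process stopped at $T$ (the paper's Lemma \ref{N} {\bf(iii)}, together with {\bf(ii)} and Lemma \ref{fin} to handle the event $X_T=x_0$). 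This is precisely the step you label ``the main obstacle'' and do not prove, so the proposal has a genuine gap at its core.

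Two remarks on why the gap cannot be closed simply by ``invoking Sections 2--3''. First, what Sections 2--3 identify is the martingale $M^\Phi$ with all-plus spins as the density of the \emph{plain} reversed VRJP $\tilde Z$ (Theorem \ref{trrnd}); the process $\check Y$ of Theorem \ref{inverse1} carries the extra factors $\langle\sigma_j\rangle_s/\langle\sigma_i\rangle_s$ in its rates, and its density is the sum $N^\Phi=\sum_\sigma M^{\sigma\Phi}$, which is established nowhere in Sections 2--3 -- the paper itself points out this distinction at the end of Section 3. Second, your sketch (Doob transform by the one-point functions, a random-current rewriting, time reversal producing the MJP ``coupled with a random current'', and a separate matching of the atoms $\{\ell_x(\tau_u)=0\}$) is not the mechanism and is not developed; what is actually required is the verification that the partition function $F$ and the one-point functions $\langle\sigma_\cdot\rangle_s$ form a space--time harmonic family for the depleting dynamics, i.e.\ the explicit computation in the paper's proof of Lemma \ref{N} {\bf(iii)}: telescope the ratios $\prod_i\langle\sigma_{x_i}\rangle_{(t_i)}/\langle\sigma_{x_{i-1}}\rangle_{(t_i)}$ along a path, write $\frac{\partial_u\langle\sigma_{\check Z_u}\rangle_{(u)}}{\langle\sigma_{\check Z_u}\rangle_{(u)}}=\frac{\partial_u K}{K}-\frac{\partial_u H}{H}$, and check that the $K$-term cancels the non-Markovian holding exponential while the $H$-term reconstructs $F(D^{-1}(t))$, so that the path density of $\check Z$ equals $(N^\Phi_t/N^\Phi_0)$ times the MJP path density, using the algebraic identity of Lemma \ref{N} {\bf(i)}. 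Once that lemma is in place, your assembly (change of variables to $\Phi$, restriction to $X_T=x_0$ via $N^\Phi_T\indic_{\{X_T\neq x_0\}}=0$, and the density identification) does complete the proof exactly as in the paper's Theorem \ref{inverse2}; without it, the theorem has been reduced to an unproven assertion rather than proved.
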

\begin{remark}
Once $\phi$ is known, then obviously $\ell(\tau_u)=(\Phi^2-\phi^2)/2$ is also known: in other words, Theorem \ref{inverse1} is equivalent to the more precise identity
$$
\lll\left( (\ell(\tau_u), \phi) | \Phi\right)\stackrel{law}{=}  \left( \demi(\Phi^2-\check L^2(S)), \sigma \check L(S)\right),
$$ 
where $\check L(S)$ and $\sigma$ are distributed as in the statement of Theorem \ref{inverse1}.
\end{remark}
The proof of Theorem \ref{inverse1} is given in Section \ref{proof-inverse}. 
Theorem \ref{inverse1} is a consequence of a more precise statement, cf Theorem \ref{inverse2}, which gives the law of $(X_s)_{s\le \tau_u}$
conditionally on $\Phi$.
\section{A New proof of Theorem \ref{rk2}}
\label{sec:pf}
Let $G$ be a positive measurable test function. Letting $d\phi:=\delta(\phi_{x_0})\prod_{x\in U}d\phi_x$ and $C=(2\pi)^{-\frac{|U|}{2}}(\det G_U)^{-1/2}$
be the normalizing constant of the Gaussian free field, we get 
\begin{align}
\label{eq1}
&\Es_{x_0}\otimes P^{G,U}\left[G\left(\ell_x(\tau_u)+\frac{1}{2}\phi_x^2\right)_{x\in V}\right]\\
\nonumber
&=C\,\,\Es_{x_0}\left[\int_{\R^U} G\left(\ell_x(\tau_u)+\frac{1}{2}\phi_x^2\right) \exp\left(-\demi \Ec(\phi,\phi)\right)d\phi\right]
\\
\nonumber
&=C\,\,\Es_{x_0}\left[\sum_{\sigma\in \{+1,-1\}^V, \sigma_{x_0}=+1} \int_{\R_+^U} G\left(\ell_x(\tau_u)+\frac{1}{2}\phi_x^2\right) \exp\left(-\demi \Ec(\sigma\phi,\sigma\phi)\right)d\phi\right]
\end{align}
where in the last equality we decompose the integral according to the possible signs of $\phi$ so that the sum is on $\sigma\in \{+1, -1\}^U$ with $\sigma_{x_0}=+1$. 
In the following we simply write $\sum_\sigma$ for this sum.

The strategy is now to make the change of variables $\Phi=\sqrt{2\ell(\tau_u)+\phi^2}$.
Given $\ell=(\ell_i(t))_{i\in V, t\in\R_+}$, 
let
$$
D_{u}:=\{\Phi\in\R_+^V: \,\,\Phi_{x_0}=\sqrt{2u}, \,\, \Phi_x^2/2\ge \ell_x(\tau_u)\tx{ for all } x\in V\setminus\{x_0\}\}
$$
We first make the change of variables 
\begin{align*}
T_{u}: \R_+^V\cap\{\phi_{x_0}=0\}\rightarrow 
&\,\,D_{u}\\
\phi\mapsto&\,\,\Phi=T_u(\phi)=\left(\sqrt{2\ell_i(\tau_u)+\phi_i^2}\right)_{i\in V}.
\end{align*}
which can be inverted by $\phi= \sqrt{\Phi_i^2-2\ell(\tau_u)}$. This yields, letting $d\Phi:=\delta_{\sqrt{2u}}(\Phi_{x_0})\prod_{x\in U}d\Phi_x$, 
\begin{align}
\nonumber &\Es_{x_0}\otimes P^{G,U}\left[G\left(\ell_x(\tau_u)+\frac{1}{2}\phi_x^2\right)_{x\in V}\right]\\
\nonumber &=C\,\,\Es_{x_0}\left[\int_{\R_+^U}\sum_{\sigma} G\left(\left(\frac{\Phi_x^2}{2}\right)\right) \exp\left(-\demi\Ec(\sigma\phi,\sigma\phi)\right)|\mathsf{Jac}(T_{u}^{-1})(\Phi)|\indic_{\Phi\in {D}_u}d\Phi\right]\\
\label{eq3} &=C\,\,\Es_{x_0}\left[\int_{\R_+^U}\sum_{\sigma} G\left(\left(\frac{\Phi_x^2}{2}\right)\right) \exp\left(-\demi\Ec(\sigma\phi,\sigma\phi)\right)
\left(\prod_{x\in U}\frac{\Phi_x}{\phi_x}\right)
\indic_{\Phi\in {D}_u}d\Phi\right].
\end{align}
Note that the Jacobian is taken over all coordinates but $x_0$ ($\Phi_{x_0}=\sqrt{2u}$); if  $\Phi\in{D}_u$, then 
$$|\mathsf{Jac}(T_{u}^{-1})(\Phi))|
=|\mathsf{Jac}(\Phi\mapsto\phi)(\Phi))|=\prod_{x\in U}\frac{\Phi_x}{\phi_x}.$$
Given $\Phi\in \R_+^V$ such that $\Phi_{x_0}=\sqrt{2u}$, we define for all $i\in V$, 
\begin{align}
\label{defT}
&T=\inf\left\{t\ge0;\,\ell_{i}(t)=\frac{1}{2}\Phi_i^2\tx{ for some }i\in V\right\}
\\
\nonumber &\Phi_i(t)= \sqrt{\Phi_i^2-2\ell_i(t)}, \;\; t\le T
\end{align}
so that in (\ref{eq3}) we have $\phi=\Phi(\tau_u)$.
An important remark is that
\begin{align}\label{Tsigma} \Phi\in {D}_u\iff X_T=x_0\,\,\,\, \iff T=\tau_u\,\,.\end{align}
Finally, we define for a configuration of signs $\sigma\in \{+1,-1\}^V$ with $\sigma_{x_0}=+1$, 
\begin{align}\label{defmart}
M^{\sphi}_t=\exp\left\{-\frac{1}{2}\Ec(\sphi(t),\sphi(t))\right\}\frac{\prod_{j\ne x_0}\sigma_j\Phi_j(0)}{\prod_{j\ne X_{t}}\sigma_j\Phi_j(t)}.
\end{align}
From \eqref{eq3}--\eqref{Tsigma}, we deduce
\begin{align}
\label{eq4}
\Es_{x_0}\otimes P^{G,U}\left[G\left(\ell(\tau_u)+\frac{1}{2}\phi^2\right)\right]
=C\,\,\int_{\R_+^U}G\left(\frac{1}{2}\Phi^2\right)
\Es_{x_0}\left[\sum_{\sigma} M^{\sphi}_T\indic_{\{X_T=x_0\}}\right]d\Phi.
\end{align}
\bl
\label{l1}
For any $\Phi\in\R^V$ and $\sigma\in \{-1,1\}^V$, the process $(M^\sphi_{t\wedge T})_{t\ge0}$ is a uniformly integrable martingale.
\el
\bp
Consider the Markov process $(\ell (t), X(t))$, which obviously has generator
$
\tilde L(g)(\ell ,x)= (\frac{\partial}{\partial \ell_x} +L)g(\ell,x).
$
Let $f$ be the function defined by 
$
f(\ell,x)=\left(\prod_{y\neq x} \sigma_y \sqrt{\Phi_y^2-2\ell_y}\right)^{-1}
$.
Note that, if $t<T$, 
\begin{align*}
\frac{d}{d t}\Ec(\sphi(t),\sphi(t))=\frac{2}{(\sphi)_{X_t}(t)}L(\sphi(t))(X_t)=2{Lf\over f}(\ell(t),X_t)=2{\tilde Lf\over f}(\ell(t),X_t),
\end{align*}
since $f(\ell,x)$ does not depend on $\ell_x$.
Therefore, for $t<T$,
$$
{M^\sphi_t\over M^\sphi_0}= \frac{f(\ell(t),X(t))}{f(0,x_0)}e^{-\int_0^t {\tilde Lf\over f}(\ell(s),X(s)) ds}
$$
which implies that $M^\sphi_{t\wedge T}$ is a martingale, using for instance lemma 3.2 in \cite{ethier-kurtz} p.174-175.

The condition that $f$ is bounded in that result is not satisfied, but the proof remains true, noting that the following integrability conditions hold (see Problem 22 of Chapter 2, p.92 in \cite{ethier-kurtz}): first, using that $(\tilde Lf/f)(\ell(t),X(t))\ge-\Cst(W)$ and $(|Lf|/f)(\ell(t),X(t))\le\Cst(W)/\Phi_{X_t}(t)$, we deduce
\begin{align*}
\int_0^t {\tilde Lf\over f}(\ell(s),X_s)\exp\left(-\int_0^s{\tilde Lf\over f}(\ell(u),X_u)\,du\right)\,ds
&\le\Cst(W,\Phi)\sum_{i\in V}\int_{\Phi_i(t)}^{\Phi_i}\frac{ds}{\sqrt{s}}\\
&\le\Cst(\Phi,W,|V|).
\end{align*}

Second, $f(\ell(t),X(t))$ can be upper bounded by an integrable random variable, uniformly in $t$.  Indeed, let us consider the extension of process $(X_t)_{t\in\R}$ to $\R$, and take the convention that the local times at all sites are $0$ at time $0$. 

For all $j\in V$, let $s_j$ be the (possibly negative) local time at $j$, at the last jump from that site before reaching a local time $\Phi_j^2/2$ at that site. Let, for all $j\in V$, 
$$m_j=\Phi_j^2/2-s_j.$$

Then the random variables $m_j$, $j\in V$, are independent exponential distributions with parameters $W_j=\sum_{k\sim j}W_{jk}$, since the sequence of local times of jumps from $j$ is a Poisson Point Process with intensity $W_j$. 

Now, for all $t\in \R_+$, $j\ne X_t$, $\Phi_j(t\wedge T)\ge\sqrt{2m_j}$, so that $f(\ell(t),X(t))\le\Cst(\Phi)\prod_{j\in V}m_j^{-1/2}$. 

This enables us to conclude, since $\prod_{j\in V}m_j^{-1/2}$ is integrable, which also implies uniform integrability of $M_{t\wedge T}$, using that 
$|M_{t\wedge T}|\le\Cst(W,\Phi,t) f(\ell(t),X(t)).$
\ep

Let us now consider the process
\begin{eqnarray}\label{defN}
N_t^{\Phi} &=&\sum_{\sigma\in \{-1,+1\}^{V\setminus\{x_0\}}} M_t^{\sigma \Phi}
\\
\nonumber
&=&
\sum_{\sigma\in \{-1,+1\}^{V\setminus\{x_0\}}} \exp\left\{-\frac{1}{2}\Ec(\sigma\Phi(t),\sigma \Phi(t))\right\}\frac{\prod_{j\ne x_0}\sigma_j \Phi_j(0)}{\prod_{j\ne X_{t}}\sigma_j\Phi_j(t)}.
\end{eqnarray}

\bl \label{sign-flip}
For all $x\neq x_0$, we have
\begin{eqnarray} \label{sflip}
N_T^{\Phi}\indic_{\{X_T=x \}}=0
\end{eqnarray}
\el
\bp
Let $\sigma^x$ be the spin-flip of $\sigma$ at $x$ : $\sigma^x=\epsilon^x \sigma$ with $\epsilon^x_y=-1$ if $y=x$ and $1$ if $y\neq x$.  
If $x\neq x_0$ and $X_T=x$, then
$$
M^{\sigma^x\Phi}_T=-M^\sphi_T.
$$
Indeed, since $\Phi_x(T)= 0$, then $\sigma^x\Phi(T)= \sphi(T)$, and the minus sign comes from the numerator of the product term in (\ref{defmart}).
By symmetry, the left-hand side of (\ref{sflip}) is equal to
$$
 \demi \left(\sum_{\sigma\in \{-1,+1\}^{V\setminus\{x_0\}}}\left( M^\sphi_T+M^{\sigma^x\Phi}_T\right)\right) \indic_{\{X_T=x \}}=0.
$$
\ep
It follows from Lemmas \ref{l1} and \ref{sign-flip}, by the optional stopping theorem (using the uniform integrability of $M_{t\wedge T}$), that 
\begin{align*}
&\Es_{x_0}\otimes P^{G,U}\left[G\left(\ell_x(\tau_u)+\frac{1}{2}\phi_x^2\right)_{x\in V}\right]
\\
&=
C\,\,\int_{\R_+^U} G\left(\left(\frac{1}{2}\Phi_x^2\right)_{x\in V}\right)
\Es_{x_0}\left[N^\phi_T\indic_{\{X_T=x_0\}}\right]d\Phi \\
&=C\,\,\int_{{\R_+^U}} G\left(\frac{1}{2}\Phi^2\right)
 \Es_{x_0}[N^\phi_T]d\Phi=C\,\,\int_{\R_+^{U}} G\left(\frac{1}{2}\Phi^2\right) N^\phi_0\,d\Phi
 \\
 &=C\,\,\int_{\R_+^{U}} G\left(\frac{1}{2}\Phi^2\right) \left(\sum_{\sigma} \exp\left\{-\frac{1}{2}\Ec(\sphi,\sphi)\right\}\right)d\Phi\\
 &=C\,\,\int_{\R^{U}} G\left(\frac{1}{2}\Phi^2\right) \left( \exp\left\{-\frac{1}{2}\Ec(\Phi,\Phi)\right\}\right)d\Phi,
\end{align*}
which concludes the proof of Theorem \ref{rk2}.
\section{Link with vertex-reinforced jump process}
The aim of this section is to point out a link between the Ray-Knight identity and a reversed version of the Vertex-Reinforced Jump Process (VRJP). 

It is organized as follows. In Subsection \ref{sec:dvrjp} we compute the Radon-Nykodim derivative of the VRJP,  which is similar to the martingale $M$ in Section \ref{sec:pf}: the computation can be used in particular to provide a direct proof of exchangeability of VRJP. In Subsection \ref{sec:drvrjp} we introduce a time-reversed version of the VRJP, i.e. where the process subtracts rather than adds local time at the site where it stays (see Definition \ref{def:trrjp}).  Then we show in Theorem \ref{trrnd} that its Radon-Nykodim derivative is the martingale $M^\sigma$ in Section  \ref{sec:pf} with positive spins $\sigma\equiv +1$. Note that the ``magnetized'' reversed VRJP, defined in Section 1 and related to the inversion of Ray-Knight in Theorem \ref{inverse1}, involves instead the sum $N^\Phi$, cf (\ref{defN}), of all the martingales $M^\sphi$.

\subsection{The vertex-reinforced jump process and its Radon-Nykodim derivative}
\label{sec:dvrjp}
\begin{definition}
Given positive conductances on the edges of the graph $(W_e)_{e\in E}$ and initial positive local times $(\phi_i)_{i\in V}$, the vertex-reinforced jump process (VRJP) is a continuous-time process $(Y_t)_{t\ge0}$ on $V$, starting at time $0$ at some vertex $z\in V$ and such that, if $Y$ is at a vertex $i\in V$ at time $t$, then, conditionally on $(Y_s, s\le t)$, the process jumps to a neighbour $j$ of $i$ at rate $W_{i,j}L_j(t)$, where
$$L_j(t):=\phi_j+\int_0^t {\textnormal{\1}}_{\{Y_s=j\}}\,ds.$$
\end{definition}
The Vertex-Reinforced Jump Process was initially proposed by Werner in 2000, first studied by Davis and Volkov \cite{dv1,dv2},  then Collevechio \cite{collevecchio2,collevecchio3}, Basdevant and Singh \cite{bs}, and Sabot and Tarr\`es \cite{sabot-tarres}. 

Let $D$ be the increasing functional 
$$D(s)=\frac{1}{2}\sum_{i\in V} (L_i^2(s)-\phi_i^2),$$
define the time-changed VRJP
$$Z_t= Y_{ D^{-1}(t)}.$$
and let, for all $i\in V$ and $t\ge0$,  $\ell^Z_i(t)$ be the local time of $Z$ at time $t$.
\begin{lemma}
\label{markovtime}
The inverse functional $D^{-1}$ is given by
$$
D^{-1}(t)=\sum_{i\in V}(\sqrt{\phi_i^2+2\ell^Z_i(t)}-\phi_i).
$$
Conditionally on the past at time $t$, the process $Z$ jumps from $Z_t=i$ to a neighbour $j$ at rate
$$
W_{i,j}\sqrt{\frac{\phi_j^2+2\ell^Z_j(t)}{\phi_i^2+2\ell^Z_i(t)}}.
$$
\end{lemma}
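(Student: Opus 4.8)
The plan is to use the explicit form of the time change $D$ to reduce everything to an elementary change of variables. \textbf{Step 1 (the clock).} Since $D(s)=\tfrac12\sum_{i\in V}(L_i^2(s)-\phi_i^2)$ and $\tfrac{d}{ds}L_i(s)=\indic_{\{Y_s=i\}}$, we get $D'(s)=\sum_{i\in V}L_i(s)\indic_{\{Y_s=i\}}=L_{Y_s}(s)\ge\min_{i}\phi_i>0$. Hence $D$ is continuous, strictly increasing, with $D(0)=0$ and $D(s)\ge s\min_i\phi_i\to\infty$ (and $Y$ does not explode on a finite graph), so $D$ is a bijection of $\R_+$ and $D^{-1}$ is well defined, continuous and strictly increasing, with $(D^{-1})'(t)=1/L_{Y_{D^{-1}(t)}}(D^{-1}(t))$.

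\textbf{Step 2 (the key identity).} I would show $L_i(D^{-1}(t))^2=\phi_i^2+2\ell^Z_i(t)$. Writing $\ell^Z_i(t)=\int_0^t\indic_{\{Z_u=i\}}\,du$ and substituting $u=D(s)$, so that $du=D'(s)\,ds=L_{Y_s}(s)\,ds$ and $\indic_{\{Z_u=i\}}=\indic_{\{Y_s=i\}}$, one gets $\ell^Z_i(t)=\int_0^{D^{-1}(t)}\indic_{\{Y_s=i\}}L_i(s)\,ds$. On $\{Y_s=i\}$ we have $L_i(s)\indic_{\{Y_s=i\}}\,ds=L_i(s)\,dL_i(s)=d\bigl(\tfrac12 L_i^2(s)\bigr)$, so the integral equals $\tfrac12(L_i^2(D^{-1}(t))-\phi_i^2)$, which is the claim; equivalently $L_i(D^{-1}(t))=\sqrt{\phi_i^2+2\ell^Z_i(t)}$.

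\textbf{Step 3 (the two conclusions).} For the inverse functional, write $\ell^Y_i(s)=\int_0^s\indic_{\{Y_u=i\}}\,du=L_i(s)-\phi_i$; then $D^{-1}(t)=\sum_{i\in V}\ell^Y_i(D^{-1}(t))=\sum_{i\in V}\bigl(L_i(D^{-1}(t))-\phi_i\bigr)=\sum_{i\in V}\bigl(\sqrt{\phi_i^2+2\ell^Z_i(t)}-\phi_i\bigr)$. For the jump rates I would invoke the standard time-change rule for pure-jump Markov processes: if $Z_t=Y_{D^{-1}(t)}$ with $D$ absolutely continuous and strictly increasing, the rate at which $Z$ jumps from $i$ to $j$ at time $t$ equals the $Y$-rate at VRJP-time $s=D^{-1}(t)$ divided by $D'(s)$. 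Since the $Y$-rate from $i$ to $j$ is $W_{i,j}L_j(s)=W_{i,j}\sqrt{\phi_j^2+2\ell^Z_j(t)}$ and, on $\{Z_t=i\}$, $D'(s)=L_i(s)=\sqrt{\phi_i^2+2\ell^Z_i(t)}$, the $Z$-rate is $W_{i,j}\sqrt{(\phi_j^2+2\ell^Z_j(t))/(\phi_i^2+2\ell^Z_i(t))}$.

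The main obstacle is a clean justification of the time-change formula for the jump rates: either by rephrasing it as a statement about the generator or martingale problem of $Z$, or by directly computing from $Y$ the conditional law of the waiting time and destination of the next jump of $Z$ using the change of clock $ds=dt/D'(s)$. Everything else is the elementary substitution of Step 2; note also that Step 1 guarantees $D^{-1}$ is genuinely defined on all of $\R_+$, so no localization at an explosion time is needed.
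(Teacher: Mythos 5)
Your proof is correct and follows essentially the same route as the paper: the key identity $L_i(D^{-1}(t))=\sqrt{\phi_i^2+2\ell^Z_i(t)}$ (the paper's $\ell^Z_i(D(s))=(L_i^2(s)-\phi_i^2)/2$), combined with $D'(s)=L_{Y_s}(s)$ and the elementary time-change of the jump rates. The only cosmetic difference is that you obtain the formula for $D^{-1}(t)$ from the identity ``elapsed time equals the sum of local times'' rather than by integrating $(D^{-1})'(t)=1/\sqrt{\phi_{Z_t}^2+2\ell^Z_{Z_t}(t)}$, which is an equally valid and equally elementary step.
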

\begin{proof}
The proof is elementary and already  in \cite{sabot-tarres} [Section 4.3, Proof of Theorem 2 ii)] in a slightly modified version, but we include it here for completeness. 
First note that, for all $i\in V$, 
\begin{equation}
\label{ellx}
\ell^Z_i(D(s))=(L_i^2(s)-\phi_i^2)/2,
\end{equation}
since
$$
(\ell^Z_i(D(s)))'= D'(s)\indic_{\{Z_{D(s)}=i\}}={L_{Y_s}(s)} \indic_{\{Y_s=i\}}.
$$
Hence
$$(D^{-1})'(t)={1\over D'(D^{-1}(t))}
={1\over L_{Z_t} (D^{-1}(t))}
=\frac{1}{\sqrt{\phi_{Z_t}^2+2\ell^Z_{Z_t}(t)}},
$$
which yields the expression for $D^{-1}$.
It remains to prove the last assertion:
\begin{eqnarray*}
\P(Z_{t+dt}=j | \fff_t) &=&
\P(Y_{D^{-1}(t+dt)}=j | \fff_t)\\
&=&
W_{Z_t,j}(D^{-1})'(t) L_{j}(D^{-1}(t)) dt
=
W_{Z_t,j}\sqrt{\frac{\phi_j^2+2\ell^Z_j(t)}{\phi_{Z_t}^2+2\ell^Z_{Z_t}(t)}} dt.
\end{eqnarray*}
\end{proof}
Let $\P_{x_0,t}$ (resp. $\P_{\phi, x_0,t}^{Z}$) be the distribution, starting from $x_0$ and on the time interval $[0,t]$, of the Markov Jump Process with conductances $(W_e)_{e\in E}$ (resp. the time-changed VRJP $(Z_t)_{s\in[0,t]}$ with conductances $(W_e)_{e\in E}$ and initial positive local times $(\phi_i)_{i\in V}$). 
\begin{theorem}
\label{rnd}
The law of the time-changed VRJP $Z$ on the interval $[0,t]$  is absolutely continuous with respect to the law of the MJP $X$
with rates $W_{i,j}$, with Radon-Nikodym derivative given by
$$
{d \P_{\phi,x_0,t}^{Z}\over d\P_{x_0,t}}=
{e^{\demi \left(\Ec(\sqrt{\phi^2+2\ell(t)}, \sqrt{\phi^2+2\ell(t)})
-\Ec(\phi,\phi)\right)}}{\prod_{j\neq x_0}\phi_{j}\over \prod_{j\neq X_t}\sqrt{\phi_j^2+2\ell_j(t)}},
$$
where $\ell_j(t)$ is the local time of $X$ at time $t$ and site $j$ defined in \eqref{deflt}.
\end{theorem}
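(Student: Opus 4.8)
The plan is to combine Lemma~\ref{markovtime} with the standard change-of-measure formula between two pure jump processes with the same jump graph, and then to reorganise the resulting expression into the two factors appearing in the statement; this reorganisation is essentially the generator computation already carried out in the proof of Lemma~\ref{l1}. Throughout I write $v_i(s):=\sqrt{\phi_i^2+2\ell_i(s)}$, so that $v_i(0)=\phi_i>0$ and, by Lemma~\ref{markovtime}, $Z$ jumps from $i$ to a neighbour $j$ at time $s$ at the path-dependent rate $W_{i,j}\,v_j(s)/v_i(s)$; crucially $v_i(s)$ involves only the local time of the trajectory itself, so it has the same meaning for $X$ under $\P_{x_0,t}$ and for $Z$ under $\P^Z_{\phi,x_0,t}$. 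Since $X$ and $Z$ are non-explosive jump processes with the same admissible jumps, the usual formula (obtained e.g. by conditioning on the skeleton chain and comparing the joint densities of the holding times; equivalently the Girsanov-type formula for jump Markov processes) gives
$$
\frac{d\P^Z_{\phi,x_0,t}}{d\P_{x_0,t}}
=\Bigl(\prod_{s\le t:\,X_{s^-}\ne X_s}\frac{v_{X_s}(s)}{v_{X_{s^-}}(s)}\Bigr)\,
\exp\Bigl(-\int_0^t\sum_{j}W_{X_s,j}\Bigl(\frac{v_j(s)}{v_{X_s}(s)}-1\Bigr)ds\Bigr),
$$
the conductances cancelling in every jump factor. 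One may also bypass this formula entirely: as in Lemma~\ref{l1}, check that the right-hand side of the theorem, call it $D_t$, is a positive $\P_{x_0}$-martingale with $D_0=1$ for the generator $\tL g(\ell,x)=(\frac{\partial}{\partial\ell_x}+L)g(\ell,x)$ of $(\ell(t),X_t)$, and then verify that $D_t\,d\P_{x_0,t}$ has exactly the jump rates of Lemma~\ref{markovtime}, which identifies it with $\P^Z_{\phi,x_0,t}$.

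For the product factor, I would set $Q(s):=\bigl(\prod_{j\ne X_s}v_j(s)\bigr)^{-1}$. On each sojourn interval only the local time at the occupied vertex grows, and that vertex is omitted from the product, so $Q$ is constant there; at a jump from $i$ to $j$ at time $s$, continuity of the $v_k$'s gives $Q(s^+)/Q(s^-)=\prod_{k\ne i}v_k(s)\big/\prod_{k\ne j}v_k(s)=v_j(s)/v_i(s)$, which is precisely the corresponding jump factor. Telescoping over the sojourns, the product in the displayed identity equals $Q(t)/Q(0)=\bigl(\prod_{j\ne x_0}\phi_j\bigr)\big/\bigl(\prod_{j\ne X_t}\sqrt{\phi_j^2+2\ell_j(t)}\bigr)$, using $X_0=x_0$ and $v_j(0)=\phi_j$; this is the product prefactor in the statement.

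For the exponential factor, differentiating $v_i(s)^2=\phi_i^2+2\ell_i(s)$ gives $v_i'(s)=\indic_{\{X_s=i\}}/v_i(s)$. Inserting this into $\frac{d}{ds}\Ec(v(s),v(s))=\sum_{x,y}W_{x,y}(v_x(s)-v_y(s))(v_x'(s)-v_y'(s))$, only the vertex $X_s$ contributes, and by symmetry $\frac{d}{ds}\Ec(v(s),v(s))=-\tfrac{2}{v_{X_s}(s)}\sum_j W_{X_s,j}(v_j(s)-v_{X_s}(s))=-2\sum_j W_{X_s,j}\bigl(v_j(s)/v_{X_s}(s)-1\bigr)$; this is the increasing-local-time counterpart of the identity used in the proof of Lemma~\ref{l1}, and it is where the sign in the exponent comes from. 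Hence the integral in the displayed identity equals $-\demi\bigl(\Ec(v(t),v(t))-\Ec(v(0),v(0))\bigr)$, so its exponential with the outer minus sign is $\exp\bigl(\demi(\Ec(\sqrt{\phi^2+2\ell(t)},\sqrt{\phi^2+2\ell(t)})-\Ec(\phi,\phi))\bigr)$. Multiplying the two factors yields the asserted Radon--Nikodym derivative, and in particular the absolute continuity $\P^Z_{\phi,x_0,t}\ll\P_{x_0,t}$.

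The one point requiring genuine care is the rigorous justification of the likelihood-ratio formula above, since the rates of $Z$ are path-dependent and unbounded. In the present setting this is harmless, in contrast with Lemma~\ref{l1}: on the fixed horizon $[0,t]$ one has the deterministic two-sided bounds $\phi_j\le v_j(s)\le\sqrt{\phi_j^2+2t}$, so $D_t$ lies between two deterministic positive constants depending only on $(\phi,W,|V|,t)$ and the jump count has light tails; the martingale property, uniform integrability and equality of total masses are then immediate, and the change of measure is legitimate. Everything else is the bookkeeping carried out above.
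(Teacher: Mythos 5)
Your proposal is correct and takes essentially the same route as the paper: the paper's proof is exactly the comparison of path densities (skeleton plus jump times) that underlies your jump-process Girsanov formula, your telescoping factor $Q(s)$ is the paper's $G_i(t)$, and your computation of $\frac{d}{ds}\Ec(v(s),v(s))$ merely repackages the paper's splitting of the exponent into the off-diagonal part $F(v(t))$ and the diagonal part $\sum_i W_i\ell_i(t)$. Your closing observation that on a fixed horizon $[0,t]$ the density is squeezed between deterministic positive constants (since $\phi_j\le v_j(s)\le\sqrt{\phi_j^2+2t}$), so no integrability subtleties arise here unlike in Lemma \ref{l1}, is a correct and useful point that the paper leaves implicit.
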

\begin{proof}
In the proof, we write $\ell$ for the local time of both $Z$ and $X$, since we consider $Z$ and $X$ on the canonical space with
different probabilities. Let, for all $\psi\in\R^V$, $i\in V$, $t\ge0$,
$$F(\psi)=\sum_{\{i,j\}\in E}W_{ij}\psi_i\psi_j,\,\,G_i(t)=\prod_{j\ne i}(\phi_j^2+2\ell_j(t))^{-1/2}.$$
First note that the probability, for the time-changed VRJP $Z$, of holding at a site $v\in V$ on a time interval $[t_1,t_2]$ is 
 $$\exp\left(-\int_{t_1}^{t_2}\sum_{j\sim Z_t}W_{Z_t,j}\frac{\sqrt{\phi_j^2+2\ell_j(t)}}{\sqrt{\phi_{Z_t}^2+2\ell_{Z_t}(t)}} dt\right)
 =\exp\left(-\int_{t_1}^{t_2}d\left(F(\sqrt{\phi^2+2\ell(t)})\right)\right).$$ 
 Second, conditionally on $(Z_u, u\le t)$, the probability that $Z$ jumps from $Z_t=i$ to $j$ in the time interval 
 $[t,t+dt]$ is 
 $$W_{ij}\sqrt{\frac{\phi_j^2+2\ell_j(t)}{\phi_{i}^2+2\ell_{i}(t)}}\,dt
 =W_{ij}\frac{G_j(t)}{G_i(t)}\,dt.$$
 
 Therefore the probability that, at time $t$, $Z$ has followed a path $Z_0=x_0$, $x_1$, $\ldots$, $Z_t=x_n$ with jump times
respectively in $[t_i,t_i+dt_i]$, $i=1\ldots n$,  where $t_0=0<t_1<\ldots<t_n<t=t_{n+1}$, is 
\begin{align*}
&\exp\left(F(\phi)-F(\sqrt{\phi^2+2\ell(t)})\right)
\prod_{i=1}^{n}W_{x_{i-1}x_i}\frac{G_{x_i}(t_i)}{G_{x_{i-1}}(t_i)}\,dt_i\\
&=\exp\left(F(\phi)-F(\sqrt{\phi^2+2\ell(t)})\right)\frac{G_{X_t}(t)}{G_{x_0}(0)}
\prod_{i=1}^{n}W_{x_{i-1}x_i}\,dt_i,
\end{align*}
where we use that $G_{x_i}(t_i)=G_{x_i}(t_{i+1})$, since $Z$ stays at site $x_i$ on the time interval $[t_i,t_{i+1}]$. 

On the other hand, the probability that, at time $t$, $X$ has followed the same   path with jump times in the same intervals is
\begin{align*}
\exp\left(-\sum_{i,j: j\sim i}W_{ij}\ell_i\right)\prod_{i=1}^{n}W_{x_{i-1}x_i}\,dt_i,
\end{align*}
which concludes the proof.
\end{proof}

Note that Theorem \ref{rnd}  can be used to show exchangeability of the VRJP, and provides a martingale for the Markov Jump Process, similar to $M_t^\Phi$ in \eqref{defmart}. 

 Recall that it is shown in \cite{sabot-tarres} that the time-changed VRJP $(Z_t)_{t\ge0}$ is a mixture of Markov Jump Processes, i.e. that there exist random variables $(U_i)_{i\in V}\in \Hh_0:=\{\sum_{\in V} u_i=0\}$ with a sigma supersymmetric hyperbolic distribution with parameters $(W_{ij}\phi_i\phi_j)_{\{i,j\}\in E}$ (see Section 6 of \cite{sabot-tarres} and \cite{dsz})
such that, conditionally on $(U_i)_{i\in V}$, $Z_t$  is a Markov jump process starting from  $z$, with jump rate from  $i$ to $j$
$$
W_{i,j}e^{U_j-U_i}.
$$
In particular, the discrete time process corresponding to the VRJP observed at jump times is exchangeable, and is a mixture of reversible Markov chains with conductances
$W_{i,j}e^{U_i+U_j}$.
\subsection{The reversed VRJP and its Radon-Nykodim derivative}
\label{sec:drvrjp}
\begin{definition}
\label{def:trrjp}
Given positive conductances on the edges of the graph $(W_e)_{e\in E}$ and initial positive local times $(\Phi_i)_{i\in V}$, the reversed vertex-reinforced jump process (RVRJP) is a continuous-time process $(\tilde{Y}_t)_{0\le t\le S}$, starting at time $0$ at some vertex $i_0\in V$ such that, if $\tilde{Y}$ is at a vertex $i\in V$ at time $t$, then, conditionally on $(\tilde{Y}_s, s\le t)$, the process jumps to a neighbour $j$ of $i$ at rate $W_{i,j}\tilde{L}_j(t)$, where
$$\tilde{L}_j(t):=\Phi_j-\int_0^t {\textnormal{\1}}_{\{\tilde{Y}_s=j\}}\,ds,$$
defined up until the stopping time $\tilde S$ where one of the local times hits $0$, i.e. 
$$\tilde{S}=\inf\{t\in\R :  \tilde{L}_j(t)=0\tx{ for some }j\}.$$
\end{definition}

Similarly as for $Y$, let us define the increasing functional
$$\tilde{D}(s)=\frac{1}{2}\sum_{i\in V} (\Phi_i^2-\tilde{L}_i^2(s)),$$
define the time-changed VRJP
$$\tilde{Z}_t= \tilde{Y}_{ \tilde{D}^{-1}(t)}.$$
and let, for all $i\in V$ and $t\ge0$,  $\ell_i^{\tilde Z}(t)$ be the local time of $\tilde Z$ at time $t$.

Then, similarly as in Lemma \ref{markovtime}, conditionally on the past at time $t$, $\tilde{Z}$ jumps from $\tilde{Z}_t=i$ to a neighbour $j$ at rate
$$
W_{i,j} \sqrt{\frac{\Phi_j^2-2{\ell}^{\tilde Z}_j(t)}{\Phi_i^2-2\ell^{\tilde Z}_i(t)}},
$$
and $\tilde{Z}$ stops at time
$$\tilde{T}=\tilde{D}(\tilde{S})=\inf\left\{t\ge0;\,\ell^{\tilde Z}_i(t)=\frac{1}{2}\Phi_i^2\tx{ for some }i\in V\right\}.$$

Let $\P_{\Phi,x_0,t}^{\tilde Z}$ be the distribution of $(\tilde{Z}_t)_{t\ge0}$ on the time interval $[0,t\wedge \tilde T]$, starting from $x_0$ and initial 
condition $\Phi$.

An easy adaptation of the proof of Theorem \ref{rnd} shows 
\begin{theorem}
\label{trrnd}
The law of the time-reversed VRJP $\tilde{Z}$ on the interval $[0,t\wedge \tilde{T}]$  is absolutely continuous with respect to the law of the MJP $X$ with rates $W_{i,j}$, with Radon-Nikodym derivative given by
$$
{d \P_{\Phi,x_0,t}^{\tilde Z}\over d\P_{x_0,t}}=
{e^{-\demi \left(\Ec(\sqrt{\Phi^2-2\ell(t\wedge T)}, \sqrt{\Phi^2-2\ell(t\wedge T)})
-\Ec(\Phi,\Phi)\right)}}{\prod_{j\neq x_0}\Phi_j\over \prod_{j\neq X_{t\wedge T}}\sqrt{\Phi_j^2-2\ell_j(t\wedge T)}},
$$
where $\ell_j(t)$ (resp. $T$) is the local time of $X$ at time $t$ and site $j$ (resp. the stopping time) defined in \eqref{deflt} (resp. in \eqref{defT}).
\end{theorem}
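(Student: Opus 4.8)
The plan is to run the proof of Theorem \ref{rnd} essentially verbatim, keeping track of the one structural change: the time-changed reversed process $\tilde Z$ \emph{subtracts} local time, so every occurrence of $\sqrt{\phi^2+2\ell}$ is replaced by $\sqrt{\Phi^2-2\ell(\cdot\wedge T)}$ and, crucially, one sign flips. Concretely, I would retain the notation $F(\psi)=\sum_{\{i,j\}\in E}W_{ij}\psi_i\psi_j$ and introduce $\tilde G_i(t)=\prod_{j\ne i}(\Phi_j^2-2\ell_j(t\wedge T))^{-1/2}$; note that on $[0,T]$ the only factor that could degenerate, the one indexed by $X_{t\wedge T}$, is omitted from this product, so $\tilde G_i$ is finite and positive up to and including time $T$.

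First I would compute the holding probabilities. Using the rate description of $\tilde Z$ recalled just above the statement, the probability that $\tilde Z$ remains at a vertex $v$ throughout a time interval $[t_1,t_2]\subset[0,T]$ equals $\exp\bigl(-\int_{t_1}^{t_2}\sum_{j\sim v}W_{vj}\sqrt{(\Phi_j^2-2\ell_j(t))/(\Phi_v^2-2\ell_v(t))}\,dt\bigr)$. While $\tilde Z$ sits at $v$, only $\ell_v$ grows, with $\tfrac{d}{dt}\ell_v(t)=1$, hence $\tfrac{d}{dt}\sqrt{\Phi_v^2-2\ell_v(t)}=-1/\sqrt{\Phi_v^2-2\ell_v(t)}$ and therefore $\tfrac{d}{dt}F(\sqrt{\Phi^2-2\ell(t)})=-\sum_{j\sim v}W_{vj}\sqrt{(\Phi_j^2-2\ell_j(t))/(\Phi_v^2-2\ell_v(t))}$; this is exactly where a sign differs from Theorem \ref{rnd}. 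Thus the holding probability equals $\exp\bigl(F(\sqrt{\Phi^2-2\ell(t_2)})-F(\sqrt{\Phi^2-2\ell(t_1)})\bigr)$. Next, the jump rate of $\tilde Z$ from $i$ to $j$ at time $t$ is $W_{ij}\sqrt{(\Phi_j^2-2\ell_j(t))/(\Phi_i^2-2\ell_i(t))}=W_{ij}\,\tilde G_j(t)/\tilde G_i(t)$. Multiplying holding probabilities and jump factors along a path $x_0=\tilde Z_0,x_1,\dots,x_n=\tilde Z_{t\wedge T}$ with jump times in $[t_k,t_k+dt_k]$, $t_0=0<t_1<\dots<t_n<t\wedge T$, the holding terms telescope (initial local times vanish) to $\exp\bigl(F(\sqrt{\Phi^2-2\ell(t\wedge T)})-F(\Phi)\bigr)$, and the ratios $\tilde G_{x_k}(t_k)/\tilde G_{x_{k-1}}(t_k)$ telescope to $\tilde G_{X_{t\wedge T}}(t\wedge T)/\tilde G_{x_0}(0)$, using that $\tilde G_{x_k}$ does not depend on $\ell_{x_k}$ and hence is constant on each sojourn interval $[t_k,t_{k+1}]$ — just as in the proof of Theorem \ref{rnd}. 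The MJP $X$ assigns the same path probability $\exp\bigl(-\sum_i W_i\ell_i(t\wedge T)\bigr)\prod_k W_{x_{k-1}x_k}\,dt_k$, with $W_i=\sum_{j\sim i}W_{ij}$, so the Radon--Nikodym derivative is the quotient $\exp\bigl(F(\sqrt{\Phi^2-2\ell(t\wedge T)})-F(\Phi)+\sum_i W_i\ell_i(t\wedge T)\bigr)\,\tilde G_{X_{t\wedge T}}(t\wedge T)/\tilde G_{x_0}(0)$.

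Finally I would rewrite this in the stated form using the algebraic identity $\Ec(f,f)=\sum_i W_if(i)^2-2F(f)$: applied with $f=\sqrt{\Phi^2-2\ell(t\wedge T)}$ and with $f=\Phi$, it gives $F(\sqrt{\Phi^2-2\ell(t\wedge T)})-F(\Phi)+\sum_iW_i\ell_i(t\wedge T)=-\demi\bigl(\Ec(\sqrt{\Phi^2-2\ell(t\wedge T)},\sqrt{\Phi^2-2\ell(t\wedge T)})-\Ec(\Phi,\Phi)\bigr)$; and $\tilde G_{X_{t\wedge T}}(t\wedge T)/\tilde G_{x_0}(0)=\prod_{j\ne x_0}\Phi_j\big/\prod_{j\ne X_{t\wedge T}}\sqrt{\Phi_j^2-2\ell_j(t\wedge T)}$, yielding the announced formula. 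The only genuinely new point over Theorem \ref{rnd}, and the step to watch, is the killing at $T$: one should observe that $X_T$ is precisely the site $i$ at which $\ell_i(T)=\Phi_i^2/2$ (the cap can only be reached while sitting there), so the vanishing factor is the one excluded from the product and from $\tilde G$, the expression is well-defined and finite on all of $[0,t\wedge T]$, and the only rate that blows up at $T$ is in the excluded coordinate. Everything else, including the (slightly informal) infinitesimal path bookkeeping, is identical to the proof of Theorem \ref{rnd}.
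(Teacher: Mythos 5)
Your proposal is correct and follows essentially the same route as the paper, which proves Theorem \ref{trrnd} precisely as "an easy adaptation of the proof of Theorem \ref{rnd}": the same holding-probability/jump-rate path computation with the sign of the local-time drift reversed, the same telescoping of the $\tilde G_i$ factors, and the same rewriting of the exponent via $\Ec(f,f)=\sum_i W_i f(i)^2-2F(f)$. Your extra remark that the coordinate vanishing at $T$ is exactly the one excluded from the products (so the expression stays finite up to $t\wedge T$) is a correct and welcome clarification of the only point where the adaptation needs care.
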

Hence, the Radon-Nikodym derivative of the time-reversed VRJP with respect to the MJP is the martingale that appears in the proof of Theorem \ref{rk2}, more precisely 
$$
{d \P_{\Phi,x_0,t}^{\tilde Z}\over d\P_{x_0,t}}=
\frac{M^\Phi_{t\wedge T}}{M^\Phi_0},
$$
with the notations of Section 2.
Note that this Radon-Nikodym derivative involves the martingale $M$ with positive spins $\sigma\equiv +1$. 
The "magnetized" inverse VRJP, defined in Section 1 and related to the inversion of Ray-Knight in Theorem \ref{inverse1}, involves
the sum all the martingales $M^\sphi$: this is the purpose of next section. 

\section{Proof of lemma \ref{fin} and Theorem \ref{inverse1}}\label{proof-inverse}


The proofs of Lemma \ref{fin} and Theorem \ref{inverse1} rely on a time change of the process $\check Y$ which is in fact the same time change as the one
appearing in Section 3 for $\tY$ : let us define
$$\check{D}(s)=\frac{1}{2}\sum_{i\in V} (\Phi_i^2-{\check L}_i^2(s)),$$
define the time-changed VRJP
$${\check Z}_t= {\check Y}_{ {\check D}^{-1}(t)}.$$
and let, for all $i\in V$ and $t\ge0$,  ${\ell}^{\check Z}_i(t)$ be the local time of $\check Z$ at time $t$.

Then, similarly to Lemma \ref{markovtime}, conditionally on the past at time $t$, $\check{Z}$ jumps from $\check{Z}_t=i$ to a neighbour $j$ at rate
$$
W_{i,j} \sqrt{\frac{\Phi_j^2-2{\ell}^{\check Z}_j(t)}{\Phi_i^2-2{\ell}^{\check Z}_i(t)}}
{<\sigma_j>_{(t)}\over <\sigma_i>_{(t)}},
$$
where we write $<\cdot >_{(t)}$ for $<\cdot >_{D^{-1}(t)}$ according to the notation of Section \ref{intro} : more precisely,  $<\cdot >_{(t)}$ is the expectation for the Ising model with interaction
$$
J_{i,j}(D^{-1}(t))= W_{i,j}\sqrt{{\Phi_i^2-2{\ell}^{\check Z}_i(t)}}\sqrt{{\Phi_j^2-2{\ell}^{\check Z}_j(t)}}
$$
since the vectors of local times ${\ell}^{\check Z}$ and $\check L$ are related by the formula 
\begin{eqnarray}\label{changelt}
{\ell}^{\check Z}(t)= \demi (\Phi^2-{\check L}(D^{-1}(t))).
\end{eqnarray}
Clearly, this process is well defined up to time
$$\check{T}=\check{D}(\check{S})=\inf\left\{t\ge0;\,{\ell}^{\check Z}_{i}(t)=\frac{1}{2}\Phi_i^2\tx{ for some }i\in V\right\}.$$
Lemma \ref{fin} tells that $\check Z_{\check T}=x_0$. 

We denote by $\P^{\check Z}_{\Phi, z}$ the law of the process $\check Z$ starting from the initial condition $\Phi$ and initial state $z$
up to the time $\check T$ (as for $\check Y$ this law depends on the choice of $x_0$). 

We now prove a more precise version of Theorem \ref{inverse1}, giving a description of the conditional law of the full process.
\begin{theorem}\label{inverse2}
With the notations of Theorem \ref{inverse1}, under $\P_{x_0}\left( \cdot | \Phi\right)$, $(\left(X_t)_{t\in [0, \tau_u]}, \phi\right)$ has the law of 
$(\left(({\check Z}(t))_{t\in [0,T]}, \sigma
\Phi(T)\right)$ where $\check Z$ is distributed under $\P_{\Phi, x_0}^{\check Z}$ and 
$\sigma$ is distributed according to the Ising model with interation $W_{i,j}\Phi_i(T)\Phi_j(T)$.
\end{theorem}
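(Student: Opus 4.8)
The plan is to retrace the argument of Section~\ref{sec:pf} but to keep track, at each step of the change of variables, of the full path $(X_t)_{t\le\tau_u}$ and the signs $\sigma$ rather than only of the field $\Phi$, and then to identify the resulting conditional law with the law of the magnetized reversed process $\check Z$ via the Radon--Nikodym computation of Theorem~\ref{trrnd}. More precisely, fix a positive measurable test function $G$ now depending on a whole path together with the field; starting from $\Es_{x_0}\otimes P^{G,U}[G((X_t)_{t\le\tau_u},\phi)]$, decompose the Gaussian integral over $\R^U$ according to the sign configuration $\sigma$ exactly as in \eqref{eq1}, then apply the change of variables $T_u:\phi\mapsto\Phi=\sqrt{2\ell(\tau_u)+\phi^2}$ on each orthant as in \eqref{eq3}. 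Using \eqref{Tsigma} and the definition \eqref{defmart} of $M^\sphi$, this yields, in analogy with \eqref{eq4},
\[
\Es_{x_0}\otimes P^{G,U}\left[G\left((X_t)_{t\le\tau_u},\phi\right)\right]
=C\int_{\R_+^U}\Es_{x_0}\left[\sum_\sigma G\left((X_t)_{t\le T},\s\Phi(T)\right)M^\sphi_T\indic_{\{X_T=x_0\}}\right]d\Phi,
\]
where on the event $\{X_T=x_0\}$ one has $T=\tau_u$ and $\Phi(T)=\phi$. The point is that $G$ is now allowed to see the path and the spins, so this is a genuine identity of joint laws of $((X_t)_{t\le\tau_u},\phi,\Phi)$ rather than of $(\ell(\tau_u)+\tfrac12\phi^2)$ alone.

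Next I would read off what this identity says about the conditional law given $\Phi$. Dividing by the density $N^\phi_0=\sum_\sigma\exp(-\tfrac12\Ec(\s\Phi,\s\Phi))$ of $\tfrac12\Phi^2$ under the right-hand side of Theorem~\ref{rk2} (as computed at the end of Section~\ref{sec:pf}), the conditional law of $((X_t)_{t\le\tau_u},\phi)$ given $\Phi$ is
\[
\frac{1}{N^\Phi_0}\,\Es_{x_0}\left[\sum_\sigma G\left((X_t)_{t\le T},\s\Phi(T)\right)M^\sphi_T\indic_{\{X_T=x_0\}}\right].
\]
By Lemma~\ref{l1} each $M^\sphi_{t\wedge T}$ is a uniformly integrable martingale, and by Theorem~\ref{trrnd} the normalized martingale $M^\Phi_{t\wedge T}/M^\Phi_0$ (the positive-spin one) is exactly the Radon--Nikodym derivative $d\Pb^{\tilde Z}_{\Phi,x_0,t}/d\Pb_{x_0,t}$ of the time-changed reversed VRJP with respect to the MJP. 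Summing a single martingale $M^\sphi$ against $X$ therefore tilts $\Pb_{x_0}$ to the law of $\tilde Z$ whose rates carry the extra factor coming from the fixed sign pattern $\sigma$; summing over all $\sigma$ and regrouping the sum inside the expectation into the ratio $\sum_\sigma M^\sphi_t/\sum_\sigma M^\sphi_0$ produces precisely the rate
$W_{i,j}\sqrt{(\Phi_j^2-2\ell_j)/(\Phi_i^2-2\ell_i)}\;\langle\sigma_j\rangle_{(t)}/\langle\sigma_i\rangle_{(t)}$
of $\check Z$, because $\sum_\sigma \sigma_i \exp(-\tfrac12\Ec(\s\Phi(t),\s\Phi(t)))/\sum_\sigma\exp(-\tfrac12\Ec(\s\Phi(t),\s\Phi(t)))$ is by definition $\langle\sigma_i\rangle_{(t)}$ for the Ising model with interaction $J_{i,j}=W_{i,j}\Phi_i(t)\Phi_j(t)$. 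This is the computation ``an easy adaptation of the proof of Theorem~\ref{rnd}'' alluded to in the excerpt: one writes the probability of a fixed path with fixed jump times under $\check Z$ as a telescoping product of the hold-probabilities and jump-rate ratios and compares it term by term with the same quantity under $\Pb_{x_0}$, the Ising partition functions $F(s)$ cancelling in telescoping fashion just as the $G_{x_i}$ did in Theorem~\ref{rnd}. It follows that, weighting each path by $N^\Phi_T/N^\Phi_0$ and restricting to $\{X_T=x_0\}$, the law of $(X_t)_{t\le T}$ is exactly $\Pb^{\check Z}_{\Phi,x_0}$ (the restriction to $X_T=x_0$ being automatic by Lemma~\ref{fin}), while the conditional law of the sign $\sigma$, i.e.\ of the orthant, given the path, is the Ising measure with interaction $W_{i,j}\Phi_i(T)\Phi_j(T)$ and boundary spin $\sigma_{x_0}=+1$ — because the relative weight of $\sigma$ inside $\sum_\sigma M^\sphi_T$ at the terminal time $T$ is proportional to $\exp(-\tfrac12\Ec(\s\Phi(T),\s\Phi(T)))$. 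Recalling $\phi=\s\Phi(T)$ on $\{X_T=x_0\}$ gives the claimed description, and Theorem~\ref{inverse1} follows by forgetting the path.

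The main obstacle, and the step deserving the most care, is the passage from ``sum of martingales $M^\sphi$ tilting $X$'' to ``a single honest Markov jump process $\check Z$ with the magnetized rates.'' One has to verify that the $\Phi$-dependent measure $d\Qc := (N^\Phi_{t\wedge T}/N^\Phi_0)\,d\Pb_{x_0}$ is indeed a probability (uniform integrability of $N^\Phi_{t\wedge T}=\sum_\sigma M^\sphi_{t\wedge T}$ follows from Lemma~\ref{l1} term by term, and $N^\Phi_0$ normalizes it) and, more substantively, that under $\Qc$ the canonical process is Markov with the stated rates. The cleanest route is the direct path-space computation described above — exhibiting the density of a cylinder event (fixed discrete skeleton, jump times in prescribed infinitesimal intervals) under $\Qc$ and recognizing it as the corresponding density for $\check Z$ — using the telescoping identity $G_{x_i}(t_i)=G_{x_i}(t_{i+1})$ and its Ising analogue $F(D^{-1}(t_i))$-ratios, together with the local-time relation \eqref{changelt}; the positivity $\langle\sigma_i\rangle_s>0$ for $s<S$ (Ising ferromagnetic correlation inequality) guarantees the rates are well defined up to $\check T$, and Lemma~\ref{fin} closes the argument by ensuring the tilted process a.s.\ terminates at $x_0$, matching the indicator $\indic_{\{X_T=x_0\}}$.
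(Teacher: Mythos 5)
Your proposal is correct and follows essentially the same route as the paper, which packages your key step as Lemma \ref{N}: the explicit Ising expression for $N^\Phi_t$, its vanishing at $T$ when $X_T\neq x_0$, and the identification of $N^\Phi_{t\wedge T}/N^\Phi_0$ as the Radon--Nikodym derivative of $\P^{\check Z}_{\Phi,x_0}$ with respect to the Markov jump process, proved by exactly the cylinder-density comparison you outline (the substantive cancellation being the identity for $\partial_u K(u)/K(u)$ with $K(u)=F(D^{-1}(u))\langle\sigma_{\check Z_u}\rangle_{(u)}$, which absorbs the holding-time exponential). The only loose phrase is the claim that a single $M^{\sigma\Phi}$ tilts $\P_{x_0}$ to a law of $\tilde Z$-type for general $\sigma$ (it is not positive unless $\sigma\equiv+1$), but your argument does not use this: it rests, as in the paper, on the positive sum $N^\Phi$.
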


We will adopt the following notation
\begin{eqnarray}\label{Phit}
\Phi_i(t)=\sqrt{\Phi_i^2-2l_i^{\check Z}(t)}={\check L}_i(D^{-1}(t)).
\end{eqnarray}

Recall that $M_t^{\phi}$, $N_t^\Phi$ and $T$ are  the processes (starting with the initial conditions $\phi$ and $\Phi$) and stopping times defined respectively in \eqref{defmart}, \eqref{defN} and \eqref{defT}, as a function of the path of the Markov process $X$ up to time $t$. 
The proof of Theorem \ref{inverse2} is based on the following lemma. 

\begin{lemma}\label{N}
We have:
\begin{enumerate}
\item[{\bf(i)}] For all $t\le T$,
$$
N^{\Phi}_t =e^{\sum_{i\in V}W_i(\ell_i(t)-\demi \Phi_i^2)} F(D^{-1}(t))<\sigma_{X_t}>_{(t)}\left(\frac{\prod_{j\ne x_0} \Phi_j(0)}{\prod_{j\ne X_{t}}\Phi_j(t)}\right),
$$
where $F(D^{-1}(t))$ (resp. $<\cdot >_{(t)}$) corresponds to the partition function (resp. distribution) of the Ising model with interaction $J_{i,j}(D^{-1}(t))=J_{i,j} \Phi_i(t)\Phi_j(t)$, and $W_i=\sum_{j\sim i} W_{i,j}$.

\item[{\bf(ii)}] 
$
N_{T}=0 
$ 
if $X_{T}\neq x_0$.

\item[{\bf(iii)}]  Under $\P_{Z}$ (the law of the MJP $(X_t)$), $N^\Phi_{t\wedge T}$ is a positive martingale, more precisely, ${N^\Phi_{t\wedge T}/N_0^\Phi}$ 
is the Radon-Nykodim derivative of the measure $\P_{\Phi,z}^{\check Z}$ with respect to the law of the MJP $X$ starting from $z$ and stopped at time $T$.
\end{enumerate}
\end{lemma}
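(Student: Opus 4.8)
The plan is to prove the three assertions of Lemma \ref{N} in order, since (ii) and (iii) follow rather directly once (i) is established. For part (i), I would start from the definition \eqref{defN} of $N^\Phi_t$ as the sum over sign configurations $\sigma\in\{-1,+1\}^{V\setminus\{x_0\}}$ of $M^{\sigma\Phi}_t$, and expand $\Ec(\sigma\Phi(t),\sigma\Phi(t))$ using the explicit Dirichlet form. The key algebraic identity is that
\[
\Ec(\sigma\Phi(t),\sigma\Phi(t))=\sum_{i\in V}W_i\Phi_i(t)^2-2\sum_{\{i,j\}\in E}W_{i,j}\Phi_i(t)\Phi_j(t)\sigma_i\sigma_j,
\]
so that $\exp\{-\demi\Ec(\sigma\Phi(t),\sigma\Phi(t))\}=\exp\{-\demi\sum_i W_i\Phi_i(t)^2\}\exp\{\sum_{\{i,j\}\in E}J_{i,j}(D^{-1}(t))\sigma_i\sigma_j\}$ with $J_{i,j}(D^{-1}(t))=W_{i,j}\Phi_i(t)\Phi_j(t)$. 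The site-energy prefactor $\exp\{-\demi\sum_i W_i\Phi_i(t)^2\}$ does not depend on $\sigma$ and, using $\Phi_i(t)^2=\Phi_i^2-2\ell_i(t)$, can be rewritten as $\exp\{\sum_i W_i(\ell_i(t)-\demi\Phi_i^2)\}$ times the $t$-independent constant; actually one must be a bit careful and keep $\Phi_i(t)^2=\Phi_i^2-2\ell_i(t)$ so that the prefactor equals $e^{\sum_i W_i(\ell_i(t)-\frac12\Phi_i^2)}$, matching the statement. The $\sigma$-dependence that remains inside the sum is $e^{\sum_{\{i,j\}}J_{i,j}(D^{-1}(t))\sigma_i\sigma_j}$ times the factor $\prod_{j\ne x_0}\sigma_j\Phi_j(0)/\prod_{j\ne X_t}\sigma_j\Phi_j(t)$ coming from \eqref{defmart}. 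The product of signs $\prod_{j\ne x_0}\sigma_j / \prod_{j\ne X_t}\sigma_j$ telescopes: when $X_t\ne x_0$ it equals $\sigma_{X_t}$ (using $\sigma_{x_0}=+1$), and when $X_t=x_0$ it equals $1=\sigma_{x_0}$; in all cases it is $\sigma_{X_t}$. Hence $\sum_\sigma e^{\sum J_{i,j}\sigma_i\sigma_j}\,\sigma_{X_t}=F(D^{-1}(t))<\sigma_{X_t}>_{(t)}$ by the very definition of the Ising partition function and expectation, and collecting the $\sigma$-free factors $\prod_{j\ne x_0}\Phi_j(0)/\prod_{j\ne X_t}\Phi_j(t)$ yields exactly formula (i).

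For part (ii), at time $T=\tau_u$ if $X_T=x\ne x_0$ then $\ell_x(T)=\Phi_x^2/2$, i.e.\ $\Phi_x(T)=0$, so the interaction $J_{i,j}(D^{-1}(T))$ involving site $x$ vanishes and site $x$ decouples. Then by the spin-flip symmetry at $x$ — which is precisely the content of Lemma \ref{sign-flip}, but one can also read it directly from (i): $<\sigma_{X_T}>_{(T)}=<\sigma_x>_{(T)}$ is the Ising expectation of $\sigma_x$ in a model where $x$ is a free decoupled spin, hence equals $0$. Either way $N^\Phi_T\indic_{\{X_T=x\}}=0$, which is statement (ii); I would simply cite Lemma \ref{sign-flip} since $N^\Phi_T = N^\Phi_T$ in the notation there.

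For part (iii), positivity of $N^\Phi_{t\wedge T}$ is immediate from formula (i): the prefactors are positive, $F>0$, and $<\sigma_{X_t}>_{(t)}>0$ by Griffiths/positivity of the Ising model with $+$ boundary condition (as already invoked in Section \ref{intro}, citing \cite{werner}), valid for $t<T$ where all $\Phi_i(t)>0$; at $t=T$ it remains nonnegative. That $N^\Phi_{t\wedge T}$ is a martingale follows by summing Lemma \ref{l1} over the finitely many $\sigma$ (each $M^{\sigma\Phi}_{t\wedge T}$ is a uniformly integrable martingale, hence so is the finite sum $N^\Phi_{t\wedge T}$). Finally, to identify $N^\Phi_{t\wedge T}/N^\Phi_0$ with $d\P^{\check Z}_{\Phi,z}/d\P_{x_0,t}$, I would compare with Theorem \ref{trrnd}: there the Radon--Nikodym derivative of the (non-magnetized) reversed VRJP $\tilde Z$ is $M^\Phi_{t\wedge T}/M^\Phi_0$ with $\sigma\equiv+1$. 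For the magnetized process $\check Z$, whose jump rate from $i$ to $j$ carries the extra factor $<\sigma_j>_{(t)}/<\sigma_i>_{(t)}$, the path-probability computation of Theorem \ref{rnd}/\ref{trrnd} goes through verbatim with the holding-time exponent now built from $F$-derivatives weighted by these Ising ratios; the telescoping of the $G$-type factors combined with the telescoping of the Ising-expectation ratios along the path (using that $<\sigma_{x_i}>_{(t_i)}=<\sigma_{x_i}>_{(t_{i+1})}$ while $\check Z$ holds at $x_i$, and the change-of-variable $J_{i,j}(D^{-1}(t))=W_{i,j}\Phi_i(t)\Phi_j(t)$) produces exactly the right-hand side of formula (i) divided by its value at $t=0$. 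I expect the main obstacle to be this last bookkeeping step: getting the holding-time integrals to reassemble into the single exponential $e^{\sum_i W_i(\ell_i(t)-\frac12\Phi_i^2)}F(D^{-1}(t))$ and checking that the $<\sigma_\cdot>$ ratios telescope correctly against the endpoint factor $<\sigma_{X_t}>_{(t)}$, exactly as the $G_{x_i}$ factors telescope in the proof of Theorem \ref{rnd}. Everything else is a direct transcription of arguments already carried out in the excerpt.
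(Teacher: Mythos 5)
Parts (i) and (ii) of your proposal are correct and coincide with the paper's argument: expand the Dirichlet form into a site term $\demi\sum_i W_i(\Phi_i^2-2\ell_i(t))$ plus the interaction term, observe that the sign ratio $\prod_{j\ne x_0}\sigma_j/\prod_{j\ne X_t}\sigma_j$ equals $\sigma_{X_t}$, and sum over $\sigma$; for (ii) either invoke Lemma \ref{sign-flip} or note that $J_{x,y}(T)=0$ decouples the spin at $x=X_T$ so $<\sigma_x>_{(T)}=0$. (Minor slip: when $X_T=x\neq x_0$ one has $T\neq\tau_u$; only $\Phi_x(T)=0$ is used, so this is harmless.) Positivity and the martingale property in (iii) are also obtained as in the paper, via Ising positivity and by summing Lemma \ref{l1} over the finitely many $\sigma$.

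The genuine gap is in your identification of $N^\Phi_{t\wedge T}/N^\Phi_0$ as the Radon--Nikodym derivative of $\P^{\check Z}_{\Phi,z}$. Your bookkeeping rests on the claim that $<\sigma_{x_i}>_{(t_i)}=<\sigma_{x_i}>_{(t_{i+1})}$ while $\check Z$ holds at $x_i$, in analogy with the constancy of the factor $\check G_{x_i}$. This is false: the interactions $J_{x_i,j}(D^{-1}(u))=W_{x_i,j}\Phi_{x_i}(u)\Phi_j(u)$ involve the coordinate $\Phi_{x_i}(u)$, which decreases during the holding interval, so all magnetizations $<\sigma_y>_{(u)}$ evolve while the process sits at $x_i$ (only $\check G_{x_i}$, which omits the coordinate $x_i$, is constant there). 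Hence the product $\prod_i <\sigma_{x_i}>_{(t_i)}/<\sigma_{x_{i-1}}>_{(t_i)}$ does not telescope to $<\sigma_{\check Z_t}>_{(t)}/<\sigma_{x_0}>_{(0)}$, and your scheme explains neither where the partition-function factor $F(D^{-1}(t))$ of formula (i) comes from nor how the Ising-weighted holding exponent $\exp\left(-\int_0^t\sum_{j\sim \check Z_u}W_{\check Z_u,j}\frac{\Phi_j(u)<\sigma_j>_{(u)}}{\Phi_{\check Z_u}(u)<\sigma_{\check Z_u}>_{(u)}}\,du\right)$ becomes $e^{-\sum_i W_i\ell_i(t)}$ times the expression in (i). The paper closes exactly this step by writing the product of ratios as $<\sigma_{\check Z_t}>_{(t)}\exp\left(-\int_0^t \frac{\partial_u<\sigma_{\check Z_u}>_{(u)}}{<\sigma_{\check Z_u}>_{(u)}}\,du\right)$, splitting $\partial_u\log<\sigma_{\check Z_u}>_{(u)}=\partial_u\log K(u)-\partial_u\log H(u)$ with $K$ the numerator and $H$ the partition function, and computing $\frac{\partial_u K(u)}{K(u)}=-\sum_{j\sim\check Z_u}\frac{\Phi_j(u)}{\Phi_{\check Z_u}(u)}\frac{<\sigma_j>_{(u)}}{<\sigma_{\check Z_u}>_{(u)}}$, which cancels the holding exponent exactly, while the $H$-term integrates to $H(t)/H(0)$ and supplies the missing partition-function ratio. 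Without this cancellation the comparison with the MJP path density does not close, so the ``telescoping'' claim must be replaced by this derivative computation.
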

\begin{proof}[Proof of Lemma \ref{N}]
{\bf(i)} We expand the squares in the energy term, which yields
$$
\demi\Ec(\sigma\Phi(t),\sigma \Phi(t))=
-\sum_{\{i,j\}\in E} W_{i,j} \Phi_i(t)\Phi_j(t)\sigma_i\sigma_j+ \demi \sum_{i\in V} W_i(\Phi_i^2-2{\ell}_i(t)),
$$
and the statement follows easily.

{\bf(ii)} Same argument as in Lemma \ref{sign-flip}.
This can also be seen from the expression in {\bf(i)} since in this case all the interactions between $x$ and its neighbors vanish, indeed,
$J_{x,y}(T)= 0$, using $\Phi_x(T)=0$. This implies that the pinning $\sigma_{x_0}=+1$ has no effect on the spin $\sigma_x$, and therefore
by symmetry that  $<\sigma_x>_{(T)}=0$ if $X_T=x\neq x_0$.

{\bf(iii)} The fact that $N^\Phi_t$ is a martingale follows directly from the martingale property of the $M^{\sigma \Phi}_t$, cf Lemma \ref{l1}. It is also a consequence
of the Radon-Nykodim property proved below.
The fact that $N^\Phi_t$ is positive follows from the positive correlation in the Ising model : $<\sigma_x>_{(t)}=<\sigma_{x_0}\sigma_x>_{(t)}\ge 0$,
see for instance \cite{werner}.

The beginning of the proof follows the same line of ideas as in the proof of Theorem \ref{rnd}. 
Similarly, we set
$$
\check G_i(t)= \prod_{j\neq i} {1\over\Phi_j(t)}= \prod_{j\neq i} {1\over \sqrt{{\Phi_j^2-2{\ell}^{\check Z}_j(t)}}},
$$
so that
$$
{\Phi_j(t)\over \Phi_i(t)}={\check G_i(t)\over \check G_j(t)}.
$$

First note that the probability, for the time-changed process $\check Z$, of holding at a site $v\in V$ on a time interval $[t_1,t_2]$ is 
 $$\exp\left(-\int_{t_1}^{t_2}\sum_{j\sim \check Z_u}W_{\check Z_u,j}\frac{\Phi_j(u)<\sigma_j>_{(u)}}{\Phi_{\check Z_u}(u)<\sigma_{\check Z_u}>_{(u)}} du\right).
 $$ 
 Second, conditionally on $(\check Z_u, u\le t)$, the probability that $\check Z$ jumps from $\check Z_t=i$ to $j$ in the time interval 
 $[t,t+dt]$ is 
 $$ W_{ij}\frac{\Phi_j(t)<\sigma_j>_{(t)}}{\Phi_i(t)<\sigma_i>_{(t)}} \,dt.$$
 
 Therefore the probability that, at time $t$, $\check Z$ has followed a path $\check Z_0=x_0$, $x_1$, $\ldots$, $\check Z_t=x_n$ with jump times
respectively in $[t_i,t_i+dt_i]$, $i=1\ldots n$,  where $t_0=0<t_1<\ldots<t_n<t=t_{n+1}$, with $t\le \check T$, is 
\begin{align*}
&\exp\left( -\int_{0}^{t}\sum_{j\sim \check Z_u}W_{\check Z_u,j}\frac{\Phi_j(u)<\sigma_j>_{(u)}}{\Phi_{\check Z_u}(u)<\sigma_{\check Z_u}>_{(u)}} du    \right)
\prod_{i=1}^{n}W_{x_{i-1}x_i}\frac{\Phi_{x_i}(t_i) <\sigma_{x_i}>_{(t_i)}  }{\Phi_{x_{i-1}}(t_i)<\sigma_{x_{i-1}}>_{(t_i)}}\,dt_i\\
=
&\exp\left( -\int_{0}^{t}\sum_{j\sim \check Z_u}W_{\check Z_u,j}\frac{\Phi_j(u)<\sigma_j>_{(u)}}{\Phi_{\check Z_u}(u)<\sigma_{\check Z_u}>_{(u)}} du    \right)
\prod_{i=1}^{n}W_{x_{i-1}x_i}\frac{\check G_{x_{i-1}}(t_i)}{\check G_{x_i}(t_i)}\frac{ <\sigma_{x_i}>_{(t_i)}  }{<\sigma_{x_{i-1}}>_{(t_i)}}\,dt_i\\
=
&\exp\left( -\int_{0}^{t}\sum_{j\sim \check Z_u}W_{\check Z_u,j}\frac{\Phi_j(u)<\sigma_j>_{(u)}}{\Phi_{\check Z_u}(u)<\sigma_{\check Z_u}>_{(u)}} du    \right)
\frac{\check G_{x_0}(0)}{\check G_{\check Z_t}(t)}\prod_{i=1}^{n}W_{x_{i-1}x_i}\frac{<\sigma_{x_i}>_{(t_i)}  }{<\sigma_{x_{i-1}}>_{(t_i)}}\,dt_i\\
\end{align*}
where we use that $\check G_{x_{i-1}}(t_{i-1})=\check G_{x_{i-1}}(t_{i})$, since $Z$ stays at site $x_{i-1}$ on the time interval $[t_{i-1},t_{i}]$. 
We now use that
\begin{align*}
\prod_{i=1}^{n}\frac{<\sigma_{x_i}>_{(t_i)}  }{<\sigma_{x_{i-1}}>_{(t_i)}}
=& \frac{<\sigma_{\check Z_t}>_{(t)}}{<\sigma_{x_0}>_{(0)}}
\prod_{i=1}^{n+1}\frac{<\sigma_{x_{i-1}}>_{(t_{i-1})}  }{<\sigma_{x_{i-1}}>_{(t_i)}}
\\
=&
<\sigma_{\check Z_t}>_{(t)} \exp\left(-\int_0^t  \frac{\frac{\partial}{\partial u} <\sigma_{\check Z_u}>_{(u)}}{<\sigma_{\check Z_u}>_{(u)}} du\right).
\end{align*}
Finally, set
$$
H(t)=F(D^{-1}(t))= \sum_{\sigma\in \{-1,+1\}^{V\setminus\{x_0\}}} \exp\left\{\sum_{\{i,j\}\in E} W_{i,j}\Phi_i(t)\Phi_j(t)\sigma_i\sigma_j\right\}
$$
and,
$$
K(t)=
 \sum_{\sigma\in \{-1,+1\}^{V\setminus\{x_0\}}} \exp\left\{\sum_{\{i,j\}\in E} W_{i,j} \Phi_i(t)\Phi_j(t)\sigma_i\sigma_j \right\} \sigma_{\check Z_t}.
 $$
We have $<\sigma_{\check Z_t}>_{(t)}=K(t)/H(t)$, so that
$$
 \frac{\frac{\partial}{\partial u} <\sigma_{\check Z_u}>_{(u)}}{<\sigma_{\check Z_u}>_{(u)}}
 =
 \frac{\frac{\partial}{\partial u} K(u)}{K(u)}- \frac{\frac{\partial}{\partial u} H(u)}{H(u)}.
 $$
Now, since
$$
\frac{\partial}{\partial u} \left\{\sum_{\{i,j\}\in E} W_{i,j} \Phi_i(u)\Phi_j(u)\sigma_i\sigma_j\right\}=
-\sum_{j\sim \check Z_u} W_{\check Z_u,j} \frac{\Phi_j(u)}{\Phi_{\check Z_u}(u)}\sigma_{\check Z_u}\sigma_j,
$$
we have that
$$
\frac{\frac{\partial}{\partial u} K(u)}{K(u)}=  -\sum_{j\sim \check Z_u} {\Phi_j(u)\over \Phi_{\check Z_u}(u)} 
{<\sigma_{j}>_{(u)}\over  <\sigma_{\check Z_u}>_{(u)}}
$$
 These identities imply that the probability that, at time $t$, 
$\check Z$ has followed a path $\check Z_0=x_0$, $x_1$, $\ldots$, $\check Z_t=x_n$ with jump times
respectively in $[t_i,t_i+dt_i]$, $i=1\ldots n$,  where $t_0=0<t_1<\ldots<t_n<t=t_{n+1}$, with $t\le \check T$, is 
$$
{H(t)\over H(0)} <\sigma_{\check Z_t}>_{(t)}
\frac{\check G_{x_0}(0)}{\check G_{\check Z_t}(t)}\prod_{i=1}^{n}W_{x_{i-1}x_i}\,dt_i
=
{N^\Phi(t)\over N^\Phi(0)} \exp\left\{-\sum_{i\in V} W_i \ell_i(t)\right\}\prod_{i=1}^{n}W_{x_{i-1}x_i}\,dt_i
$$
where in the last equality we used Lemma \ref{N}, {\bf(i)}. Finally, 
the probability that, at time $t$, the Markov jump process $X$ has followed the same path with jump times in the same intervals is
\begin{align*}
\exp\left(-\sum_{i\in V}W_{i}\ell_i\right)\prod_{i=1}^{n}W_{x_{i-1}x_i}\,dt_i.
\end{align*}
This exactly tells that the Radon-Nykodim derivative of $Z_{t\wedge \check T}$ under $ \P^{\check Z}_{\Phi}$ with respect to the law 
$\P$ of the Markov jump process is ${N^\Phi(t\wedge \check T)\over N^\Phi(0)}$.
\end{proof}
\begin{proof}[Proof of Lemma \ref{fin}]
By {\bf(ii)} and {\bf(iii)} of Lemma \ref{N}, we have, by the optional stopping theorem, 
\begin{eqnarray*}
\E^{\check Z}_{\Phi, x_0}\left( \indic_{\{{\check Y}_{\check S}\neq x_0\}}\right)&=&
\E^{\check Z}_{\Phi, x_0}\left( \indic_{\{{\check Z}_{\check T}\neq x_0\}}\right)
\\
&=&
\E_{x_0}\left( N_T^\Phi \indic_{\{X_T\neq x_0\}}/N_0^\Phi\right)=\E_{x_0}\left( N_T^\Phi/N_0^\Phi\right)
\\
&=&
0
\end{eqnarray*}
\end{proof}
\begin{proof}[Proof of Theorem \ref{inverse2}]
Let $\psi((X_t)_{t\in [0,\tau_u]}, \phi)\stackrel{notation}{=}\psi(X, \phi)$ and $G(\Phi)$ be test functions. We are interested in the following expectation
\begin{eqnarray}\label{e1}
\E_{x_0}\otimes P^{G,U}\left( \psi(X,\phi) G(\Phi) \right)
=\E_{x_0}\left( \int_{\R^{V\setminus\{x_0\}}} \psi(X,\phi) G(\Phi) Ce^{-\demi \Ec(\phi, \phi)} d\phi\right),
\end{eqnarray}
where, as in the proof of Theorem \ref{rk2}, $C$ is the normalizing constant of the Gaussian free field.
Recall that $\Phi= \sqrt{ \phi^2+2l(\tau_u)}$  and set $\sigma = \sign (\phi)$. 
As in the proof of Theorem \ref{rk2}  we change to variables $\Phi$. Following the computation at the beginning of the proof of Theorem
\ref{rk2} up to equation (\ref{eq4}), we deduce  that (\ref{e1}) is equal to
\begin{eqnarray}\label{e3}
C\int_{\R_+^{V\setminus\{x_0\}}} G(\Phi) \E_{x_0}\left( \sum_{\sigma} \psi(X,\sigma\Phi(T)) M_T^{\sigma\Phi} \indic_{\{X_T=x_0\}}\right) d\Phi
\end{eqnarray}
If $X_T=x_0$ then, using that $\s_{X_t}=\s_{x_0}=1$ and the expansion in the proof of Lemma \ref{N} {\bf(i)}, we deduce that
$$M_T^{\sigma\Phi}=N_T^{\Phi}\frac{e^{\sum_{\{i,j\}\in E} W_{i,j} \Phi_i(T)\Phi_j(T) \sigma_i\sigma_j}}{F(D^{-1}(T))}$$
and, therefore,
\begin{eqnarray*}
\sum_{\sigma} \psi(X,\sigma\Phi(T)) M_T^{\sigma\Phi} 
&=& N_T^{\Phi}{1\over F(D^{-1}(T))} \sum_{\sigma} \psi(X,\sigma\Phi(T)) e^{\sum_{\{i,j\}\in E} W_{i,j} \Phi_i(T)\Phi_j(T) \sigma_i\sigma_j}
\\
&=& N_T^{\Phi} <\psi(X,\sigma\Phi(T))>_{(T)}.
\end{eqnarray*}
This implies that
\begin{eqnarray*}
(\ref{e3})
&=&
C\int_{\R_+^{V\setminus\{x_0\}}} G(\Phi) \E_{x_0}\left( <\psi(X,\sigma\Phi(T))>_{(T)}N_T^\Phi \indic_{\{X_T=x_0\}}\right) d\Phi
\\
&=&
C\int_{\R_+^{V\setminus\{x_0\}}} G(\Phi) N_0^\Phi \check\E^{VRJP}_{x_0}\left( <\psi(\check Z,\sigma\Phi(T))>_{(T)} \right) d\Phi
\end{eqnarray*}
where in the last equality we used Lemma \ref{N} {\bf(ii)}-{\bf(iii)}.
Since 
$$N_0^\Phi =\sum_{\sigma\in \{-1,+1\}^{V\setminus\{x_0\}}} \exp\left\{-\frac{1}{2}\Ec(\sigma\Phi,\sigma \Phi)\right\},
$$ 
it implies that
$CN_0^\Phi$ is the density of $\Phi$ since by Theorem \ref{rk2} we have $\Phi\stackrel{law}{=} |\sqrt{2u}+\phi|^2$ where $\phi$ has the law of the Gaussian free field $P^{G,U}$.
This exactly means that
$$
\E_{x_0}\otimes P^{G,U}\left( \psi(X,\phi) | \Phi \right)=  \E^{\check Z}_{\Phi,x_0}\left( <\psi(\check Z,\sigma\Phi(T))>_{(T)}\right).
$$
\end{proof}
\begin{proof}[Proof of Theorem \ref{inverse1}]
From Theorem \ref{inverse2}, we know that conditionally on $\Phi$, $(\ell,\phi)$ has the law of
$(\ell(T), \sqrt{\Phi^2-2\ell(T)})$, where $\ell(T)$ is the local time of $\check Z$ under $\P^{\check Z}_{\Phi,x_0}$.
If we change back to the process $\check Y$ we have, using \eqref{changelt},
$$
\check L(S)=\sqrt{\Phi^2-2\ell(T)},
$$
hence, $\lll((\ell,\phi)| \Phi)$ is the law of $(\demi(\Phi^2-L^2(S)), L(S))$ for initial conditions $(\Phi, x_0)$.

\end{proof}

\section{Inversion of the generalized first Ray-Knight theorem}

We use the same notation as in the first section. The generalized first Ray-Knight theorem concerns the local time
of the Markov jump process starting at a point $z_0\neq x_0$, stopped at its first hitting time of $x_0$.
Denote by
$$H_{x_0}=\inf\{t\ge 0, \;\; X_t=x_0\},
$$
the first hitting time of $x_0$.
\begin{theorem}\label{rk1}
For any $z_0\in V$ and any $s> 0$,
\begin{align*}
&\left(\ell_x(H_{x_0})+\frac{1}{2}(\phi_x+s)^2\right)_{x\in V} \tx{ under }\P_{z_0}\otimes P^{G,U}, \tx{ has the same "law" as}\\
&\left(\frac{1}{2}(\phi_x+s)^2\right)_{x\in V}\tx{ under }(1+{\phi_{z_0}\over s})P^{G,U}.
\end{align*}
\end{theorem}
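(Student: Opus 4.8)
The plan is to replay the argument of Section \ref{sec:pf}, this time with the Markov jump process started at $z_0$ and stopped at $H_{x_0}$ rather than at $\tau_u$. Set $\psi_x=\phi_x+s$, so that $\psi_{x_0}=s$ and $\frac12(\phi_x+s)^2=\frac12\psi_x^2$, and observe that $\Ec(\psi,\psi)=\Ec(\phi,\phi)$ since the Dirichlet form is invariant under the addition of a constant. Hence under $P^{G,U}$ the field $\psi$ has density $C\,e^{-\frac12\Ec(\psi,\psi)}$ on $\{\psi_{x_0}=s\}$, while under the signed measure $(1+\phi_{z_0}/s)P^{G,U}$ it has density $\frac{\psi_{z_0}}{s}\,C\,e^{-\frac12\Ec(\psi,\psi)}$ there, because $1+\phi_{z_0}/s=\psi_{z_0}/s$. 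Theorem \ref{rk1} therefore asserts that, for every positive test function $G$,
$$\Es_{z_0}\otimes P^{G,U}\left[G\left(\left(\ell_x(H_{x_0})+\frac12\psi_x^2\right)_{x\in V}\right)\right]=C\int_{\R^U}\frac{\psi_{z_0}}{s}\,G\left(\frac12\psi^2\right)e^{-\frac12\Ec(\psi,\psi)}\,d\psi,$$
where $d\psi=\delta_s(\psi_{x_0})\prod_{x\in U}d\psi_x$.

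Starting from $G$, I would expand the Gaussian integral on the left, decompose it according to the signs $\sigma\in\{-1,+1\}^V$, $\sigma_{x_0}=+1$, of $\psi$ on $U$ (keeping $\psi_{x_0}=s>0$ fixed), and then perform the change of variables $\Phi_i=\sqrt{\psi_i^2+2\ell_i(H_{x_0})}$ for $i\in U$, $\Phi_{x_0}=s$, with Jacobian $\prod_{i\in U}\Phi_i/\psi_i$, exactly as in \eqref{eq1}--\eqref{eq3}. Given $\Phi$ with $\Phi_{x_0}=s$, let $T$ be as in \eqref{defT} and set $T^{\ast}=T\wedge H_{x_0}$; since $\ell_{x_0}(t)=0$ for $t\le H_{x_0}$, the cap at $x_0$ never binds before $H_{x_0}$. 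The substitute for \eqref{Tsigma} is that the change of variables is admissible (i.e. $\ell_i(H_{x_0})\le\frac12\Phi_i^2$ for all $i$) precisely on the event $\{H_{x_0}\le T\}=\{X_{T^{\ast}}=x_0\}$, on which $T^{\ast}=H_{x_0}$ and $\Phi_i(T^{\ast})=\sqrt{\Phi_i^2-2\ell_i(H_{x_0})}=\psi_i$. With $M^{\sigma\Phi}$ the process \eqref{defmart} run from $z_0$, this identifies the integrand factor $e^{-\frac12\Ec(\sigma\psi,\sigma\psi)}\prod_{i\in U}(\Phi_i/\psi_i)$, on that event, with $M^{\sigma\Phi}_{T^{\ast}}$, so that
$$\Es_{z_0}\otimes P^{G,U}\left[G\left(\ell(H_{x_0})+\frac12\psi^2\right)\right]=C\int_{\R_+^U}G\left(\frac12\Phi^2\right)\Es_{z_0}\left[\sum_{\sigma}M^{\sigma\Phi}_{T^{\ast}}\indic_{\{X_{T^{\ast}}=x_0\}}\right]d\Phi.$$

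The three ingredients of Section \ref{sec:pf} then transfer with only cosmetic changes. The stopped process $(M^{\sigma\Phi}_{t\wedge T^{\ast}})_t$ is a uniformly integrable martingale by the proof of Lemma \ref{l1}, the only new remark being that for $t\le T^{\ast}$ one has $\Phi_{x_0}(t)=s$ bounded away from $0$, while $\Phi_i(t\wedge T^{\ast})\ge\sqrt{2m_i}$ at the other sites, with $m_i$ the same independent exponential variables as there. The spin-flip identity of Lemma \ref{sign-flip} holds verbatim: if $X_{T^{\ast}}=x\ne x_0$ then a cap at $x$ has just been hit, $\Phi_x(T^{\ast})=0$, and flipping $\sigma_x$ leaves both the energy and the denominator of \eqref{defmart} unchanged but negates its numerator, so $\sum_\sigma M^{\sigma\Phi}_{T^{\ast}}\indic_{\{X_{T^{\ast}}=x_0\}}=\sum_\sigma M^{\sigma\Phi}_{T^{\ast}}$. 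Finally, optional stopping gives $\Es_{z_0}[M^{\sigma\Phi}_{T^{\ast}}]=M^{\sigma\Phi}_0$, and because $X_0=z_0\ne x_0$ the denominator of \eqref{defmart} at time $0$ is $\prod_{j\ne z_0}\sigma_j\Phi_j$, whence $M^{\sigma\Phi}_0=\frac{\sigma_{z_0}\Phi_{z_0}}{s}\,e^{-\frac12\Ec(\sigma\Phi,\sigma\Phi)}$ --- this extra factor $\sigma_{z_0}\Phi_{z_0}/s$ is exactly the source of the tilt $1+\phi_{z_0}/s$.

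Putting these together yields $\Es_{z_0}\otimes P^{G,U}[G(\ell(H_{x_0})+\frac12\psi^2)]=C\int_{\R_+^U}G(\frac12\Phi^2)\sum_\sigma\frac{\sigma_{z_0}\Phi_{z_0}}{s}\,e^{-\frac12\Ec(\sigma\Phi,\sigma\Phi)}\,d\Phi$; undoing the sign decomposition via $\psi_i=\sigma_i\Phi_i$ on $U$ (unit Jacobian, $\Phi^2=\psi^2$, $\sigma_{z_0}\Phi_{z_0}=\psi_{z_0}$ and $\Ec(\sigma\Phi,\sigma\Phi)=\Ec(\psi,\psi)$) turns this into $C\int_{\R^U}\frac{\psi_{z_0}}{s}G(\frac12\psi^2)e^{-\frac12\Ec(\psi,\psi)}\,d\psi=\Es^{P^{G,U}}[(1+\phi_{z_0}/s)G(\frac12(\phi+s)^2)]$, which is the asserted identity (the quotation marks around ``law'' in the statement being due to the fact that $(1+\phi_{z_0}/s)P^{G,U}$ is only a signed measure of total mass $1$). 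I expect the one place requiring genuine care to be the uniform integrability of $M^{\sigma\Phi}_{t\wedge T^{\ast}}$ in the presence of the additional stopping at $H_{x_0}$; but as indicated this is a routine adaptation of Lemma \ref{l1}, and all the real content lies in keeping track of the boundary factor $\sigma_{z_0}\Phi_{z_0}/s$ produced by the starting vertex $z_0\ne x_0$.
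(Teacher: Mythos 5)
Your proof is correct and follows essentially the same route the paper takes: it is the Section~\ref{sec:pf} argument (sign decomposition, change of variables $\Phi=\sqrt{(\phi+s)^2+2\ell}$, the martingale $M^{\sigma\Phi}$ of \eqref{defmart}, spin-flip cancellation and optional stopping), adapted to the start at $z_0$ and the stop at $T\wedge H_{x_0}$ exactly as the paper indicates in Section~5, with your boundary factor $\sigma_{z_0}\Phi_{z_0}/s$ matching the $N_0^\Phi$ computation in the proof of Theorem~\ref{inverse2-rk1}. The only difference is that the paper treats Theorem~\ref{rk1} as a known input rather than writing out this adaptation as a standalone proof, so your text supplies details (admissibility via $\{X_{T\wedge H_{x_0}}=x_0\}$, uniform integrability with $\Phi_{x_0}(t)=s$) that the paper leaves implicit.
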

\begin{remark} This theorem is in general stated for $s\neq 0$, but obviously we do not loose generality by restricting to
$s>0$.
\end{remark}
This formally means that for any test function $g$, 
\begin{equation}
\label{test1}
\int 
g \left((\ell_x(H_{x_0})+\frac{1}{2}(\phi_x+s)^2)_{x\in V} \right) d\P_{z_0}\otimes P^{G,U}
=
\int 
g \left((\frac{1}{2}(\phi_x+s)^2)_{x\in V} \right) (1+{\phi_{z_0}\over s})dP^{G,U}.
\end{equation}
Remark that the measure $(1+{\phi_{z_0}\over s})P^{G,U}$ has mass 1 (since $\phi_{z_0}$ is centered) but is not positive.
In fact, since the integrand depends only on $\vert \phi_x+s\vert$, $x\in V$, everything can be written in terms of a positive measure.
Indeed, if $\sigma_x=\sign(\phi_x+s)$, then conditionally on $\vert \phi_x +s\vert$, $x\in V$, $\sigma$ has the law of an Ising model with interaction
$J_{i,j} = W_{i,j} \vert \phi_i +s\vert \vert \phi_j +s\vert$ and boundary condition $\sigma_{x_0}= +1 $. This implies
that the right hand side of \eqref{test1} can be written equivalently as 
$$
\int  g \left( (\frac{1}{2}(\phi_x+s)^2)_{x\in V} \right) {<\sigma_{z_0}> \over s}\vert s+ \phi_{z_0}\vert d P^{G,U}$$
where $<\sigma_{z_0}>$ denotes the expectation of $\sigma_{z_0}$ with respect to the Ising model described above.
Since $\sigma_{x_0}=+1$, we have that ${<\sigma_{z_0}> \over s}\ge 0$, and ${<\sigma_{z_0}> \over s}\vert s+ \phi_{z_0}\vert d P^{G,U}$
is a probability measure.

We give now a counterpart of Theorem \ref{inverse1} for the generalized first Ray-Knight theorem. Consider the 
process $\check Y$ defined in Section 1, starting from a point $z_0$.
Denote by $\check H_{x_0}$ the first hitting time of $x_0$ by the process $\check Y$. 

Obviously, Lemma \ref{fin} implies
the following Lemma \ref{fin1}.
\begin{lemma}\label{fin1}
Almost surely $\check H_{x_0}\le S$, where $S$ is defined in (\ref{S}).
\end{lemma}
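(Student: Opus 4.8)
The plan is to read Lemma \ref{fin1} off directly from Lemma \ref{fin}, which is the substantive statement here; Lemma \ref{fin1} is a one-line corollary. Concretely, I would start the process $\check Y$ from $z_0$ with the initial local-time profile $\Phi$, so that its law is $\P_{\Phi,z_0}^{\check Y}$ and it is defined on the random interval $[0,S]$ with $S$ as in \eqref{S}. Lemma \ref{fin} asserts that, $\P_{\Phi,z_0}^{\check Y}$-almost surely, $S<\infty$ and $\check Y_S = x_0$. In particular the set $\{t\in[0,S] : \check Y_t = x_0\}$ is non-empty, since it contains the endpoint $S$; hence its infimum, which by definition is the first hitting time $\check H_{x_0}$ of $x_0$ by $\check Y$, is finite and satisfies $\check H_{x_0}\le S$. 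That is precisely the claim, and combined with $S<\infty$ it also records that $\check H_{x_0}<\infty$.

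The only point deserving a word of care — and it is not really an obstacle — is that $\check Y$ is a priori only defined up to time $S$, so ``first hitting time of $x_0$'' must be interpreted as the infimum over $[0,S]$ of the times at which $\check Y$ visits $x_0$; Lemma \ref{fin} guarantees that this visit does occur, at the latest at time $S$, which is all that is needed. Alternatively one can argue through the local time: if $\check Y$ never visited $x_0$ strictly before $S$, then $\check L_{x_0}$ would be constant, equal to $\Phi_{x_0}>0$, on $[0,S)$, and the fact that $\check Y_S = x_0$ forces the visit to take place exactly at $t=S$; either way $\check H_{x_0}\le S$. Since Lemma \ref{fin} has already been established, there is nothing further to prove, which is why the statement is flagged as ``obvious''.
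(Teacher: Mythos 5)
Your proof is correct and takes the same route as the paper: the paper gives no separate argument, simply observing that Lemma \ref{fin} (applied with initial state $z_0$) immediately yields Lemma \ref{fin1}, since $\check Y_S=x_0$ almost surely forces $\check H_{x_0}\le S$. Your additional remark on interpreting the hitting time on the interval $[0,S]$ is a harmless clarification of the same one-line deduction.
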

\begin{theorem}\label{inverse-rk1}
With the notation of Theorem \ref{rk1}, let
$$
\Phi_z=\sqrt{2\ell_z(H_{x_0})+(\phi_z+s)^2}.
$$
Under $\P_{z_0}\otimes P^{G,U}$, we have
$$
\lll\left( \phi+s | \Phi\right)\stackrel{law}{=}  (\s\check L(\check H_{x_0})),
$$
where $\check L(\check H_{x_0})$ is distributed under $\P^{\check Z}_{\Phi, z_0}$
and,  conditionally on $\check L(\check H_{x_0})$, 
 $\sigma$ is distributed according to the distribution of the Ising model with interaction $J_{i,j}(\check H_{x_0})=W_{i,j}\check L_i(\check H_{x_0})\check L_j(\check H_{x_0})$
 and boundary condition $\sigma_{x_0}=+1$.
\end{theorem}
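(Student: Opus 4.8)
\textbf{Proof proposal for Theorem \ref{inverse-rk1}.}

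The plan is to mimic the proof of Theorem \ref{inverse1} (hence Theorem \ref{inverse2}), replacing the stopping time $\tau_u$ of the Markov jump process by the hitting time $H_{x_0}$, and replacing the change of variables $\Phi=\sqrt{\phi^2+2\ell(\tau_u)}$ by $\Phi=\sqrt{(\phi+s)^2+2\ell(H_{x_0})}$. First I would start from $\E_{z_0}\otimes P^{G,U}\left(\psi(X,\phi)\,G(\Phi)\right)$, write the Gaussian density as $C e^{-\demi\Ec(\phi+s\mathbf 1,\phi+s\mathbf 1)}$ up to the shift (here one uses that $\phi+s\mathbf 1$ ranges over all of $\R^V$ with the constraint $(\phi+s\mathbf 1)_{x_0}=s$, which is exactly the boundary value that makes the ``special point'' machinery work, and which accounts for the $(1+\phi_{z_0}/s)$ weight after reverting, by the standard Cameron--Martin shift computation already implicit in Theorem \ref{rk1}). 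Then decompose according to $\sigma=\sign(\phi+s\mathbf 1)$ so that the integral over $\R^V$ becomes a sum over $\sigma\in\{\pm1\}^{V}$ with $\sigma_{x_0}=+1$ of an integral over $\R_+^V$ in the variable $|\phi+s\mathbf 1|$.

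Next I would perform the change of variables to $\Phi=\left(\sqrt{(\phi_i+s)^2+2\ell_i(H_{x_0})}\right)_{i\in V}$, with Jacobian $\prod_{j\ne x_0}\Phi_j/|\phi_j+s|$, exactly as in \eqref{eq3}. The key structural point, playing the role of \eqref{Tsigma}, is that if we define $T=\inf\{t:\ell_i(t)=\Phi_i^2/2\text{ for some }i\}$ and $\Phi_i(t)=\sqrt{\Phi_i^2-2\ell_i(t)}$, then $X$ started at $z_0$ satisfies: the image $\Phi$ lies in the relevant domain if and only if $X_T=x_0$, and in that case $T=H_{x_0}$ and $\Phi(T)=|\phi+s\mathbf 1|$. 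Here the important difference with the second Ray-Knight case is that the process starts at $z_0\ne x_0$ rather than at $x_0$ and is killed at the \emph{first} visit to $x_0$ rather than at an inverse local time; but the martingale argument is insensitive to this, since $M^{\sphi}_{t\wedge T}$ (and hence $N^\Phi_{t\wedge T}$) is a uniformly integrable martingale under $\P_{z_0}$ for any starting point, by Lemma \ref{l1} (whose proof uses only the strong Markov property of $(\ell,X)$ and the exponential structure of the jump times, not the starting point). Applying the optional stopping theorem to $N^\Phi$ at time $T$ and using $N^\Phi_T\indic_{\{X_T\ne x_0\}}=0$ (Lemma \ref{sign-flip}, i.e. Lemma \ref{N}(ii)) I get
\begin{align*}
\E_{z_0}\otimes P^{G,U}\left(\psi(X,\phi)G(\Phi)\right)
&=C\int_{\R_+^U} G\!\left(\tfrac12\Phi^2\right)\E_{z_0}\!\left(\textstyle\sum_\sigma \psi(X,\sigma\Phi(T))\,M^{\sigma\Phi}_T\indic_{\{X_T=x_0\}}\right)d\Phi\\
&=C\int_{\R_+^U} G\!\left(\tfrac12\Phi^2\right)\E_{z_0}\!\left(<\psi(X,\sigma\Phi(T))>_{(T)}\,N^\Phi_T\,\indic_{\{X_T=x_0\}}\right)d\Phi,
\end{align*}
where in the second line I used the identity $M^{\sigma\Phi}_T=N^\Phi_T\,e^{\sum_{\{i,j\}}W_{ij}\Phi_i(T)\Phi_j(T)\sigma_i\sigma_j}/F(D^{-1}(T))$ valid on $\{X_T=x_0\}$ (since then $\sigma_{X_T}=\sigma_{x_0}=+1$), exactly as in the proof of Theorem \ref{inverse2}.

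Finally, by Lemma \ref{N}(iii) the quantity $N^\Phi_{t\wedge T}/N^\Phi_0$ is the Radon--Nikodym derivative of $\P^{\check Z}_{\Phi,z_0}$ with respect to the law of the MJP $X$ started at $z_0$ and stopped at $T$ — and under $\P^{\check Z}_{\Phi,z_0}$, by Lemma \ref{fin1}, $T=\check H_{x_0}\le S$ a.s., so $\check Z$ is exactly $\check Y$ run up to its first hit of $x_0$ (after the time change \eqref{changelt}), with $\Phi(T)=\check L(\check H_{x_0})$. Hence the last display equals $C\int_{\R_+^U} G(\tfrac12\Phi^2)\,N^\Phi_0\,\check\E^{\check Z}_{\Phi,z_0}\!\left(<\psi(\check Z,\sigma\Phi(T))>_{(T)}\right)d\Phi$. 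By Theorem \ref{rk1}, $C N^\Phi_0\,d\Phi$ is precisely the law of $\Phi=\sqrt{(\phi+s)^2+2\ell(H_{x_0})}$ under $\P_{z_0}\otimes P^{G,U}$ — this is where the reversibility check $N^\Phi_0=\sum_\sigma e^{-\demi\Ec(\sigma\Phi,\sigma\Phi)}$ matches the (signed) Gaussian density after the change of variables and the $(1+\phi_{z_0}/s)$ reweighting collapses, via the Ising representation of that weight spelled out after \eqref{test1}. Comparing integrands then yields $\E_{z_0}\otimes P^{G,U}(\psi(X,\phi+s\mathbf 1)\mid\Phi)=\E^{\check Z}_{\Phi,z_0}(<\psi(\check Z,\sigma\Phi(T))>_{(T)})$; specializing $\psi$ to depend only on $\phi+s\mathbf 1$ and reverting to $\check Y$ gives the stated conditional law $\lll(\phi+s\mid\Phi)\stackrel{law}{=}\sigma\check L(\check H_{x_0})$, with $\sigma$ an Ising sample with interaction $W_{i,j}\check L_i(\check H_{x_0})\check L_j(\check H_{x_0})$ and $\sigma_{x_0}=+1$. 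The main obstacle I anticipate is the bookkeeping around the shift by $s\mathbf 1$: one must verify carefully that the Cameron--Martin/sign decomposition correctly reproduces the non-positive weight $(1+\phi_{z_0}/s)$ of Theorem \ref{rk1} as the Ising-averaged positive measure ${<\sigma_{z_0}>\over s}|s+\phi_{z_0}|\,dP^{G,U}$, and that this is the same normalization $CN^\Phi_0$ emerging from the change of variables — everything else is a routine transcription of Sections \ref{sec:pf} and \ref{proof-inverse} with $\tau_u$ replaced by $H_{x_0}$ and $x_0$ replaced by $z_0$ as the starting point.
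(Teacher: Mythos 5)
Your overall route is exactly the paper's (change of variables to $\Phi$, decomposition over $\sigma=\sign(\phi+s)$, optional stopping of the martingale $N^\Phi$, identification of $N^\Phi/N^\Phi_0$ as the Radon--Nikodym derivative of $\check Z$, then identification of $C N_0^\Phi\,d\Phi$ with the law of $\Phi$ via Theorem \ref{rk1}), but two of the steps you single out as key are wrong as stated. First, the stopping time. You work with $T=\inf\{t:\ell_i(t)=\Phi_i^2/2\hbox{ for some }i\}$ and claim that the domain condition is equivalent to $X_T=x_0$, ``and in that case $T=H_{x_0}$''. This fails: on the good event nothing is exhausted at time $H_{x_0}$ (in particular $\ell_{x_0}(H_{x_0})=0<s^2/2$, since $\Phi_{x_0}=s$), so $T>H_{x_0}$ strictly; after $H_{x_0}$ the unstopped walk may exhaust some site $j\neq x_0$ before exhausting $x_0$, so the domain event does not imply $X_T=x_0$; and even on $\{X_T=x_0\}$ one has $\Phi_{x_0}(T)=0$, whereas the reconstruction requires $\Phi(H_{x_0})=|\phi+s|$, whose $x_0$-coordinate is $s>0$. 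The correct object is $T'=H_{x_0}\wedge T$ (the paper's ``$S\wedge H_{x_0}$''): the analogue of \eqref{Tsigma} is $\Phi\in D\iff X_{T'}=x_0\iff T'=H_{x_0}$, optional stopping and the adaptation of Lemma \ref{N} {\bf(ii)}--{\bf(iii)} are applied at $T'$, and it is $T'$ that corresponds to $\check H_{x_0}$ for $\check Z$ (Lemma \ref{fin1} guaranteeing $\check H_{x_0}\le S$).

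Second, the density identification, which is precisely the point where the first Ray--Knight inversion differs from the second. Since the walk starts at $z_0\neq x_0$, the products in \eqref{defmart} no longer cancel at $t=0$: $M_0^{\sigma\Phi}=e^{-\demi\Ec(\sigma\Phi,\sigma\Phi)}\,\sigma_{z_0}\Phi_{z_0}/s$, hence $N_0^\Phi=\sum_\sigma \bigl(\sigma_{z_0}\Phi_{z_0}/s\bigr)e^{-\demi\Ec(\sigma\Phi,\sigma\Phi)}$, not $\sum_\sigma e^{-\demi\Ec(\sigma\Phi,\sigma\Phi)}$ as you wrote. That extra factor is exactly what reproduces the signed weight $(1+\phi_{z_0}/s)$ of Theorem \ref{rk1}, through the Ising representation ${<\sigma_{z_0}>\over s}\vert s+\phi_{z_0}\vert\,dP^{G,U}$ spelled out after \eqref{test1}; with your formula, $CN_0^\Phi\,d\Phi$ would be the unweighted law of $\frac12(\phi+s)^2$ and the final comparison with Theorem \ref{rk1} would not close. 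Relatedly, attributing the weight to a ``Cameron--Martin shift'' is a misdiagnosis: $\Ec$ is invariant under adding the constant $s$, so the shifted field contributes no density factor at all --- the weight comes entirely from the starting point $z_0\neq x_0$ through $N_0^\Phi$. With these two corrections (replace $T$ by $T'$ throughout, and use the corrected $N_0^\Phi$), your argument becomes the paper's proof of Theorem \ref{inverse2-rk1}, from which Theorem \ref{inverse-rk1} follows as you indicate.
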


Similarly as for the generalized second Ray-Knight theorem, Theorem \ref{inverse-rk1} is a consequence of the following more precise result. Let us consider, as in Section 4, the time-changed version $\check Z$ of the process $\check Y$. 
\begin{theorem}\label{inverse2-rk1}
With the notation of Theorem \ref{inverse-rk1}, under $\P_{z_0}\left( \cdot | \Phi\right)$, $(\left(X_t)_{t\in [0, H_{x_0}]}, \phi+s\right)$ has the law of 
$(\left(({\check Z}(t))_{t\in [0,\check H_{x_0}]}, \sigma
\Phi(\check H_{x_0})\right)$ where $\check Z$ is distributed under $\P_{\Phi, z_0}^{\check Z}$,  $\check H_{x_0}$ is the first hitting
time of $x_0$ by $\check Z$, and $\sigma$ is distributed according to the Ising model with interation $W_{i,j}\Phi_i(\check H_{x_0})\Phi_j(\check H_{x_0})$
and boundary condition $\sigma_{x_0}=+1$.
\end{theorem}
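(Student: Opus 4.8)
The plan is to follow the proof of Theorem~\ref{inverse2} almost line by line, the two genuine novelties being the constant shift of the Gaussian field by $s$ and the replacement of $\tau_u$ by the hitting time $H_{x_0}$, the latter forcing us to stop the Markov jump process at $H_{x_0}\wedge T$ (with $T$ the ``budget exhaustion'' time of~\eqref{defT}) rather than at $T$. Fix test functions $\psi=\psi((X_t)_{t\in[0,H_{x_0}]},\cdot)$ and $G$. Expanding the Gaussian density of $\phi$, using $\Ec(\phi,\phi)=\Ec(\phi+s,\phi+s)$ (the form annihilates constants), and decomposing $\int_{\R^U}$ according to the signs $\sigma_x=\sign(\phi_x+s)$, $x\in U$ (so $\sigma_{x_0}=+1$, and $(\phi+s)_{x_0}=s>0$), I obtain, writing $\chi=|\phi+s|$ so that $\Phi=\sqrt{2\ell(H_{x_0})+\chi^2}$ with $\chi_{x_0}=s$ fixed,
\begin{align*}
&\E_{z_0}\otimes P^{G,U}\big(\psi(X,\phi+s)\,G(\Phi)\big)\\
&\qquad=C\,\E_{z_0}\Big(\sum_\sigma\int_{\R_+^U}\psi(X,\sigma\chi)\,G(\Phi)\,e^{-\frac12\Ec(\sigma\chi,\sigma\chi)}\,d\chi\Big).
\end{align*}

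Next I would change variables $\chi\mapsto\Phi$ on $\R_+^U$ (keeping $\Phi_{x_0}=s$), with Jacobian $\prod_{x\neq x_0}\Phi_x/\chi_x$; this is a bijection onto $\{\Phi:\ell_x(H_{x_0})\le\frac12\Phi_x^2\ \forall x\}$, and, with $\Phi_i(t)=\sqrt{\Phi_i^2-2\ell_i(t)}$ and $T$ as in~\eqref{defT}, the range constraint is precisely $\{H_{x_0}\le T\}=\{X_{H_{x_0}\wedge T}=x_0\}$ --- the analogue of~\eqref{Tsigma}; the point is that $\ell_{x_0}(H_{x_0})=0<\frac12 s^2$, so on $\{T<H_{x_0}\}$ the budget is first exhausted at a site $\neq x_0$ and the two natural definitions of $T$ agree there. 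On that event $\chi=\Phi(H_{x_0})$ and $X_{H_{x_0}}=x_0$, so the Gaussian weight times the Jacobian is exactly $M^{\sigma\Phi}_{H_{x_0}}$ of~\eqref{defmart}; hence, as in~\eqref{eq4}, the left-hand side above equals $C\int_{\R_+^U} G(\Phi)\,\E_{z_0}\big(\sum_\sigma\psi(X,\sigma\Phi(H_{x_0}))M^{\sigma\Phi}_{H_{x_0}}\indic_{\{X_{H_{x_0}\wedge T}=x_0\}}\big)\,d\Phi$. Since $\sigma_{X_{H_{x_0}}}=+1$ there, the expansion of Lemma~\ref{N}{\bf(i)} turns the inner sum into $N^\Phi_{H_{x_0}}\langle\psi(X,\sigma\Phi(H_{x_0}))\rangle_{(H_{x_0})}$; and since $H_{x_0}\wedge T\le T$ always (so the truncated quantities are well defined) while $X_T\neq x_0$ and hence $N^\Phi_T=0$ on $\{T<H_{x_0}\}$ by Lemma~\ref{N}{\bf(ii)}, the indicator can be absorbed and the stopping moved to $H_{x_0}\wedge T$.

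It then remains to invoke Lemma~\ref{N}{\bf(iii)}: $N^\Phi_{t\wedge T}/N^\Phi_0$ is the density of $\P^{\check Z}_{\Phi,z_0}$ with respect to the MJP law, so restricting to the stopping time $H_{x_0}\wedge T$ and using Lemma~\ref{fin1} ($\check H_{x_0}\le\check T$ $\P^{\check Z}_{\Phi,z_0}$-a.s., hence the stopped $\check Z$ sits at $x_0$), the expression becomes
$$C\int_{\R_+^U} G(\Phi)\,N^\Phi_0\,\E^{\check Z}_{\Phi,z_0}\Big(\big\langle\psi(\check Z,\sigma\Phi(\check H_{x_0}))\big\rangle_{(\check H_{x_0})}\Big)\,d\Phi,$$
where $\Phi(\check H_{x_0})=\check L(\check H_{x_0})$ by~\eqref{changelt}--\eqref{Phit} and $\langle\cdot\rangle_{(\check H_{x_0})}$ is the Ising average with interaction $W_{i,j}\Phi_i(\check H_{x_0})\Phi_j(\check H_{x_0})$ and $\sigma_{x_0}=+1$. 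Taking $\psi\equiv1$ identifies $CN^\Phi_0$ as the density of $\Phi$ under $\P_{z_0}\otimes P^{G,U}$; because $X_0=z_0\neq x_0$ one computes here $N^\Phi_0=\frac{\Phi_{z_0}}{s}\sum_\sigma\sigma_{z_0}e^{-\frac12\Ec(\sigma\Phi,\sigma\Phi)}$, which is $\ge0$ by the Ising inequality $\langle\sigma_{z_0}\rangle\ge0$ (with $\sigma_{x_0}=+1$) and agrees with the positive-measure version of Theorem~\ref{rk1}. Dividing out this density yields $\E_{z_0}\otimes P^{G,U}(\psi(X,\phi+s)\mid\Phi)=\E^{\check Z}_{\Phi,z_0}(\langle\psi(\check Z,\sigma\Phi(\check H_{x_0}))\rangle_{(\check H_{x_0})})$, which is Theorem~\ref{inverse2-rk1}.

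The step I expect to be the main obstacle is the bookkeeping around the stopping time $H_{x_0}\wedge T$: unlike in Theorem~\ref{rk2}, the ``good'' set is the inclusion $\{H_{x_0}\le T\}$ rather than an equality of times, and $\Phi(H_{x_0})$ is not even real on its complement, so one must argue carefully (i) that the two natural definitions of the budget-exhaustion time coincide on $\{T<H_{x_0}\}$, (ii) that $N^\Phi$ vanishes there so the indicator is harmless, and (iii) that the surviving functional of the path stopped at $H_{x_0}\wedge T$ is exactly the one that appears under $\P^{\check Z}_{\Phi,z_0}$. A secondary, though easy, point absent from the second Ray--Knight case is that the law of $\Phi$ now carries the non-constant density $CN^\Phi_0$, whose positivity and total mass $1$ are read off from Theorem~\ref{rk1} and the Ising correlation inequality.
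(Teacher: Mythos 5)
Your proposal is correct and follows essentially the same route as the paper: the change of variables to $\Phi$ with the stopping time $T'=H_{x_0}\wedge T$, the reduction of $\sum_\sigma \psi\, M^{\sigma\Phi}_{T'}$ to $N^\Phi_{T'}\langle\psi\rangle_{(T')}$ on $\{X_{T'}=x_0\}$ via Lemma \ref{N}{\bf(i)}, the absorption of the indicator through Lemma \ref{N}{\bf(ii)}, the Radon--Nikodym identification of Lemma \ref{N}{\bf(iii)} combined with Lemma \ref{fin1}, and the computation $N^\Phi_0=\frac{\Phi_{z_0}}{s}\sum_\sigma\sigma_{z_0}e^{-\frac12\Ec(\sigma\Phi,\sigma\Phi)}$. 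The only (harmless) deviation is that you identify $CN^\Phi_0$ as the density of $\Phi$ self-containedly by taking $\psi\equiv1$, whereas the paper reads this off from Theorem \ref{rk1}.
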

\begin{proof} We only sketch the proof since it is very similar to the proof of  Theorem \ref{inverse2}.
Let $\psi((X_t)_{t\in [0, H_{x_0}]}, \phi+s)\stackrel{notation}{=}\psi(X, \phi+s)$ and $G(\Phi)$ be positive test functions. We are interested in the following expectation
\begin{eqnarray}\label{ee1}
\;\;\;\;\; \E_{z_0}\otimes P^{G,U}\left( \psi(X,\phi+s) G(\Phi) \right)
=\E_{z_0}\left( \int_{\R^{V\setminus\{x_0\}}} \psi(X,\phi+s) G(\Phi) Ce^{-\demi \Ec(\phi, \phi)} d\phi\right)
\end{eqnarray}
where, as in the proof of Theorem \ref{rk2}, $C$ is the normalizing constant of the Gaussian free field.
Recall that $\Phi= \sqrt{ (\phi+s)^2+2\ell(H_{x_0})}$, set $\sigma = \sign (\phi+s)$ and define
$$
T'=S\wedge H_{x_0}.
$$
As in the proof of Theorem \ref{rk2}  we change to variables $\Phi$. An easy adaptation of the computation in the proof of Theorem \ref{rk2} 
up to equation (\ref{eq4}) yields  that (\ref{ee1}) is equal to
\begin{eqnarray}\label{e3}
C\int_{\R_+^{V\setminus\{x_0\}}} G(\Phi) \E_{z_0}\left( \sum_{\sigma} \psi(X,\sigma\Phi(T'))
M_{T'}^{\sigma\Phi} \indic_{\{X_{T'}=x_0\}}\right) d\Phi,
\end{eqnarray}
As in the proof of Theorem \ref{inverse2} we have that, if $X_{T'}=x_0$, then
\begin{eqnarray*}
\sum_{\sigma} 
\psi(X,\sigma\Phi(T')) M_{T'}^{\sigma\Phi} 
&=& N_{T'}^{\Phi}{1\over F(D^{-1}(t))} \sum_{\sigma} 
\psi(X,\sigma\Phi(T')) e^{\sum_{\{i,j\}\in E} W_{i,j} \Phi_i(T')\Phi_j(T') \sigma_i\sigma_j}
\\
&=& N_{T'}^{\Phi} <
\psi(X,\sigma\Phi(T'))>_{(T')}.
\end{eqnarray*}
This implies that
\begin{eqnarray*}
(\ref{e3})
&=&
C\int_{\R_+^{V\setminus\{x_0\}}} G(\Phi) 
\E_{z_0}\left( <
\psi(X,\sigma\Phi(T'))>_{(T')}N_{T'}^\Phi \indic_{X_{T'}=x_0}\right) d\Phi
\\
&=&
C\int_{\R_+^{V\setminus\{x_0\}}} G(\Phi) 
N_0^\Phi \E^{\check Z}_{\Phi,z_0}\left( <
\psi(\check Z,\sigma\Phi(T'))>_{(T')} \right) d\Phi,
\end{eqnarray*}
using in the last equality  an easy adaptation of Lemma \ref{N} {\bf(ii)-(iii)} for time $T'$.
Now
$$N_0^\Phi =\sum_{\sigma\in \{-1,+1\}^{V\setminus\{z_0\}}}\left({\sigma_{z_0}\Phi_{z_0}\over s}\right)  \exp\left\{-\frac{1}{2}\Ec(\sigma\Phi,\sigma \Phi)\right\},
$$ 
which implies that
$CN_0^\Phi$ is the density of $\Phi$ since by Theorem \ref{rk1} we have $\Phi\stackrel{law}{=} |\phi+s|^2$ under 
$(1+{\phi_{z_0}\over s}) P^{G,U}$.
This exactly means that
$$
\E_{z_0}\otimes P^{G,U}\left( \psi(X,\phi) | \Phi \right)=  \E^{\check Z}_{\Phi,z_0}\left( <\psi(\check Z,\sigma\Phi(T'))>_{(T')}\right).
$$

\end{proof}

\medskip
\noindent
{\bf Acknowledgment.} We are grateful to Alain-Sol Sznitman and Jay Rosen for several useful comments on a first version of the manuscript. We thank also Yuval Peres
for interesting discussions.

\footnotesize
\nocite{*}
\bibliographystyle{plain}
\bibliography{ray-knight}

\end{document}